\newtheorem{theorem}{Theorem}[section]
\newtheorem{lemma}[theorem]{Lemma}
\newtheorem{proposition}[theorem]{Proposition}
\theoremstyle{definition}
\newtheorem{definition}[theorem]{Definition}
\newtheorem{remark}[theorem]{Remark}
\newtheorem{notation}{Notation}
\numberwithin{equation}{section}
\newcommand{\cmark}{\ding{51}}%
\newcommand{\xmark}{\ding{55}}%
\global\long\def\C{\mathbb{C}}
\global\long\def\N{\mathbb{N}}
\global\long\def\R{\mathbb{R}}
\global\long\def\H{\mathbb{H}}
\global\long\def\Z{\mathbb{Z}}
\global\long\def\Q{\mathbb{Q}}
\global\long\def\mf#1{\mathfrak{#1}}
\global\long\def\mc#1{\mathcal{#1}}
\global\long\def\im{\mathrm{im}}
\global\long\def\id{\mathrm{id}}
\global\long\def\.{,\dots ,}
\global\long\def\Aut{\operatorname{Aut}}
\global\long\def\Lie{\operatorname{Lie}}
\global\long\def\so{\mathfrak{so}}
\global\long\def\ad{\mathrm{ad}}
\global\long\def\su{\mathfrak{su}}
\global\long\def\so{\mathfrak{so}}
\global\long\def\sp{\mathfrak{sp}}
\global\long\def\sl{\mathfrak{sl}}
\global\long\def\b{\mathfrak{b}}
\global\long\def\g{\mathfrak{g}}
\global\long\def\h{\mathfrak{h}}
\global\long\def\l{\mathfrak{l}}
\global\long\def\m{\mathfrak{m}}
\global\long\def\s{\mathfrak{s}}
\global\long\def\Der{\operatorname{Der}}
\global\long\def\rank{\operatorname{rank}}
\global\long\def\End{\operatorname{End}}
\author{Reinier Storm}
\address{KU Leuven, Department of Mathematics, Celestijnenlaan 200B -- Box 2400, BE-3001 Leuven, Belgium} 
\email{reinier.storm@kuleuven.be}
\thanks{The author is supported by project 3E160361 of the KU Leuven Research Fund.}
\keywords{naturally reductive, homogeneous space, parallel skew torsion, classification}
\subjclass{Primary 53C30, Secondary 53B20}
\begin{document}

\title{The classification of 7- and 8-dimensional naturally reductive spaces}

\begin{abstract}
A new method for classifying naturally reductive spaces is presented. This method relies on the structure theory of naturally reductive spaces developed in \cite{Storm2018a} and the new construction of naturally reductive spaces in \cite{Storm2018}. We obtain the classification of all naturally reductive spaces in dimension 7 and 8.
\end{abstract}
\maketitle
\section{Introduction}
A locally naturally reductive space is a Riemannian manifold together with a metric connection which has parallel skew-torsion and parallel curvature. A locally symmetric space can thus be seen as a naturally reductive space with zero torsion. In the seminal paper \cite{CartanE1926} Cartan classified all symmetric space. 

The story for naturally reductive spaces is quite different of course. A nice concise classification as for the symmetric spaces is not available. A good place to start is to classify them in small dimensions. This has been done in the dimensions $3,~4,~5$ in \cite{TricerriVanhecke1983,KowalskiVanhecke1983,KowalskiVanhecke1985} and more recently in dimension $6$ in \cite{AgricolaFerreiraFriedrich2015}. These classifications essentially rely on being able to parametrize the possible torsion and curvature tensors of the naturally reductive connections and then solving the first Bianchi identity.
Such a parametrization breaks down in higher dimensions. The recent developments in \cite{Storm2018} and \cite{Storm2018a} tell us however that naturally reductive spaces are still very rigid. 
This gives us the ability to present here a completely new way to classify naturally reductive spaces.

\subsection{Results} 
The new approach presented here can be applied in any dimension, but becomes increasingly more elaborate as the dimension increases. Therefore, it becomes important to find ways to limit the possible cases. This will be carried out explicitly for naturally reductive spaces of dimension $7$ and $8$. 
An important point is the division of naturally reductive spaces into two types as in \cite{Storm2018a}:
\begin{enumerate}[\hspace{20pt} Type I:]
	\item The transvection algebra is semisimple.
	\item The transvection algebra is not semisimple.
\end{enumerate}
Another simplification of the classification comes from the partial duality of naturally reductive spaces defined in \cite{Storm2018a}. This makes the classification much more transparent.
For the spaces of type I we will only list the compact ones and in case a non-compact partial dual space exists we will mention this. For the spaces of type II we will only list the ones for which the semisimple factor of the canonical base space is compact and we will mention if a partial dual spaces exist. 
The classification result of all $7$- and $8$-dimensional naturally reductive spaces is summarized in \Cref{thm:classification type I} for type I and in \Cref{thm:classification type II} for type II.

\section{preliminaries}
The essential structure of a locally homogeneous space is encoded in the infinitesimal model. We now briefly discuss this below.

\begin{theorem}[Ambrose-Singer, \cite{AmbroseSinger1958}]
	A complete simply connected Riemannian manifold $(M,g)$ is a homogeneous Riemannian manifold if and only if there exists a metric connection $\nabla$ with torsion $T$ and curvature $R$ such that 
	\begin{equation}
	\nabla T = 0 \quad \mbox{and}\quad \nabla R = 0. \label{eq:AS connection}
	\end{equation}
\end{theorem}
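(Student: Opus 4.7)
The plan is to prove the two implications separately, with the forward direction being essentially constructive and the reverse direction requiring an integration argument.

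For the forward direction, I would start from a presentation $M = G/H$ and exploit the fact that the isotropy representation is orthogonal to produce an $\Ad(H)$-invariant reductive decomposition $\g = \h \oplus \m$. The canonical connection $\nabla$ associated to this decomposition is characterised by declaring the $G$-invariant extensions of vectors in $\m$ to be parallel at the basepoint. A direct calculation yields, at the basepoint, the identities $T(X,Y) = -[X,Y]_{\m}$ and $R(X,Y)Z = -[[X,Y]_{\h}, Z]$ for $X, Y, Z \in \m$, so that $T$ and $R$ are $G$-invariant tensors. Since $\nabla$ is itself $G$-invariant, the derived tensors $\nabla T$ and $\nabla R$ are $G$-invariant, and one easily verifies they vanish at the basepoint, hence everywhere.

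For the reverse direction, I would reconstruct an infinitesimal model at a basepoint $p \in M$. Set $\h \subseteq \so(T_pM)$ equal to the holonomy algebra of $\nabla$ at $p$ (a genuine subalgebra of $\so(T_pM)$ since $\nabla$ is metric), set $\m := T_pM$, and equip the direct sum $\g := \h \oplus \m$ with a bracket defined by the commutator on $\h$, by $[A,X] := A(X)$ on $\h \times \m$, and by $[X,Y] := -T_p(X,Y) + R_p(X,Y)$ on $\m \times \m$. The crucial step is verifying the Jacobi identity: the component on $\m \times \m \times \m$ reduces to the first Bianchi identity for $(T,R)$ with respect to $\nabla$, while the component on $\h \times \m \times \m$ reduces to the second Bianchi identity. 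Both Bianchi identities are consequences of $\nabla T = 0$ and $\nabla R = 0$ together with the defining property that $R$ takes values in the holonomy algebra $\h$.

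The final step is to integrate this infinitesimal model to a transitive action on $M$. Let $G$ denote the simply connected Lie group with Lie algebra $\g$ and let $H \subseteq G$ be the connected subgroup integrating $\h$. I would define the $G$-action on $M$ by path-developing: for a smooth curve in $\g$ starting at zero, one integrates the associated time-dependent vector field on $M$ via $\nabla$-parallel transport. The main obstacle will be showing that this local construction globalises to an honest action of $G$ with stabiliser $H$, giving the diffeomorphism $M \cong G/H$. This argument combines completeness of $(M,g)$ (to extend developments to all time), simple-connectedness of $M$ (to rule out monodromy), and the parallelism conditions $\nabla T = 0$, $\nabla R = 0$ (to ensure the developed endpoint depends only on the class in $G$, not on the representative path). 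The algebraic Bianchi calculation is routine once the bracket is set up correctly, but the globalisation step is the analytic heart of the theorem.
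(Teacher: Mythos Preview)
The paper does not prove this theorem; it is quoted as the classical result of Ambrose and Singer \cite{AmbroseSinger1958} and used as background for the Nomizu construction that follows. So there is no in-paper argument to compare against.

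Your outline is the standard route and is essentially correct. Two remarks on points you gloss over. In the forward direction, the existence of an $\Ad(H)$-invariant complement $\m$ is not a formal consequence of the isotropy representation being orthogonal; what you actually use is that, since $G$ acts by isometries and $M$ is Riemannian, the isotropy group $H$ is compact (it sits in $O(T_pM)$), and then averaging produces the reductive splitting. In the reverse direction, your choice $\h = \mathrm{hol}_p(\nabla)$ is fine because the curvature endomorphisms $R_p(x,y)$ lie in the holonomy algebra, which is exactly what the Jacobi identity on $\h\times\m\times\m$ needs; note, however, that the paper's Nomizu construction takes the (possibly larger) stabiliser $\{h\in\so(\m):h\cdot T=0,~h\cdot R=0\}$ instead. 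Either choice yields a Lie algebra and a transitive local action. Your development/globalisation sketch is the genuine analytic content; the cleanest way to organise it is to observe that $\nabla T=0$, $\nabla R=0$ force all local $\nabla$-affine frames to be related by elements of the model group, so the pseudogroup of local isometries is transitive, and then invoke completeness and simple-connectedness to pass from local to global homogeneity.
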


A metric connection satisfying \eqref{eq:AS connection} is called an \emph{Ambrose-Singer connection}. The torsion $T$ and curvature $R$ of an Ambrose-Singer connection evaluated at a point $p\in M$ are linear maps
\begin{equation}
T_p: \Lambda^2 T_p M \to T_p M,\quad  R_p: \Lambda^2 T_p M \to \so(T_p M),\label{eq:(T,R)}
\end{equation}
which satisfy for all $x,y,z\in T_pM$
\begin{align}
&R_p(x,y)\cdot T_p = R_p(x,y)\cdot R_p=0 \label{eq:par T and R}, \\
&\mf{S}^{x,y,z} R_p(x,y)z - T_p(T_p(x,y),z) = 0  \label{eq:B1},\\ 
&\mf{S}^{x,y,z} R_p(T_p(x,y),z) =0  \label{eq:B2},
\end{align}
where $\mf{S}^{x,y,z}$ denotes the cyclic sum over $x,y$ and $z$ and $\cdot$ denotes the natural action of $\so(T_p M)$ on tensors. The first equation encodes that $T$ and $R$ are parallel objects for $\nabla$ and under this condition the first and second Bianchi identity become equations \eqref{eq:B1} and \eqref{eq:B2}, respectively. A pair of tensors $(T,R)$, as in \eqref{eq:(T,R)}, on a vector space $\m$ with a metric $g$ satisfying \eqref{eq:par T and R}, \eqref{eq:B1} and \eqref{eq:B2} is called an \emph{infinitesimal model} on $(\m,g)$. From the infinitesimal model $(T,R)$ of a homogeneous space one can construct a homogeneous space with infinitesimal model $(T,R)$. This construction is known as the \emph{Nomizu construction}, see \cite{Nomizu1954}. This construction goes as follows.
Let 
\[
\h := \{h\in \so(\m) : h\cdot T=0,~ h\cdot R=0\}.
\]
and set
\begin{equation}\label{eq:Nomizu Lie algebra}
\g := \h \oplus \m.
\end{equation}
On $\g$ the following Lie bracket is defined for all $h,k\in \h$ and $x,y\in \m$:
\begin{equation}
[h + x,k + y] := [h,k]_{\so(\m)} - R(x, y) + h(y) - k(x) - T(x,y),\label{eq:Nomizu Lie bracket}
\end{equation}
where $[-,-]_{\so(\m)}$ denotes the Lie bracket in $\so(\m)$. The bracket from \eqref{eq:Nomizu Lie bracket} satisfies the Jacobi identity if and only if $R$ and $T$ satisfy the equations \eqref{eq:par T and R}, \eqref{eq:B1} and \eqref{eq:B2}. We will call $\g$ the \emph{symmetry algebra} of the infinitesimal model $(T,R)$. Let $G$ be the simply connected Lie group with Lie algebra $\g$ and let $H$ be the connected subgroup with Lie algebra $\h$. The infinitesimal model is \emph{regular} if $H$ is a closed subgroup of $G$. If this is the case, then clearly the canonical connection on $G/H$ has the infinitesimal model $(T,R)$ we started with.
In \cite[Thm.~5.2]{Tricerri1992} it is proved that an infinitesimal model coming from an invariant connection on a globally homogeneous Riemannian manifold, as in \eqref{eq:(T,R)}, is regular.

\begin{definition}\label{def:nat red decomp}
	Let $(\g = \h \oplus \m,g)$ be a Lie algebra together with a subalgebra $\h\subset \g$, a complement $\m$ of $\h$ and a metric $g$ on $\m$. Suppose $\ad(\h)\m\subset \m$ and for all $x,y,z\in \m$ that
	\[
	g([x,y]_\m,z) = -g(y,[x,z]_\m).
	\]
	Then we call $(\g=\h\oplus \m,g)$ a \emph{naturally reductive decomposition} with $\h$ the \emph{isotropy algebra}. We will mostly refer to just $\g=\h\oplus \m$ as a naturally reductive decomposition and let the metric be implicit. The infinitesimal model of the naturally reductive decomposition is defined by 
	\begin{align}
	T(x,y)&:= -[x,y]_\m,                &\quad \forall x,y\in \m,\label{eq:torsion from lie bracket} \\
	R(x,y)&:=-\ad([x,y]_\h) \in \so(\m), &\quad \forall x,y\in \m,\label{eq:curvature from lie bracket} 
	\end{align}
	where $[x,y]_\m$ and $[x,y]_\h$ are the $\m$- and $\h$-component of $[x,y]$, respectively. We call the decomposition an \emph{effective} naturally reductive decomposition if the restricted adjoint map $\ad:\h\to \so(\m)$ is injective. We will say that $\g$ is the \emph{transvection algebra} of the naturally reductive decomposition $\g=\h\oplus \m$ if the decomposition is effective and $\im(R) = \ad(\h) \subset \so(\m)$. Note that \eqref{eq:par T and R} implies that $\im(R)\subset \so(\m)$ is a subalgebra and that the transvection algebra is always a Lie subalgebra of the symmetry algebra. When we simply refer to $\g=\h\oplus \m$ as a \textit{naturally reductive transvection algebra}, then we mean that this is a naturally reductive decomposition for which $\g$ is also the transvection algebra.
\end{definition}

The following theorem is a classical result by Kostant (see also \cite{D'AtriZiller1979}).
\begin{theorem}[Kostant,\cite{Kostant1956}] \label{thm:kostant}
	Let $(\g=\h\oplus \m,g)$ be an effective naturally reductive decomposition. Then $\mf{k}:= [\m,\m]_{\h}\oplus \m$ is an ideal in $\g$ and there exists a unique $\ad(\mf{k})$-invariant non-degenerate symmetric bilinear form $\overline{g}$ on $\mf{k}$ such that $\overline{g}|_{\m\times\m}=g$ and $[\m,\m]_\h \perp \m$. Conversely, any $\ad(\g)$-invariant non-degenerate symmetric bilinear form on $\g=\h\oplus \m$ with $\m=\h^\perp$ and $\overline{g}|_{\m\times\m}$ positive definite gives a naturally reductive decomposition.
\end{theorem}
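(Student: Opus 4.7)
The plan is to split the proof into three parts: existence of $\overline{g}$, uniqueness, and the converse. First, verify that $\mf{k} = [\m,\m]_\h \oplus \m$ is an ideal of $\g$. The inclusions $[\m, \mf{k}] \subset \mf{k}$ and $[\h, \m] \subset \m \subset \mf{k}$ are immediate, so the only non-trivial point is $[\h, [\m,\m]_\h] \subset [\m,\m]_\h$. For $h \in \h$ and $x, y \in \m$, the Jacobi identity and reductivity yield $[h, [x,y]] = [[h, x], y] + [x, [h, y]] \in [\m, \m]$, and taking the $\h$-component gives $[h, [x,y]_\h] \in [\m,\m]_\h$.

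For the form itself, the restrictions $\overline{g}|_{\m\times\m} = g$ and $\overline{g}(\m, [\m,\m]_\h) = 0$ are prescribed, so only the piece on $[\m,\m]_\h\times[\m,\m]_\h$ remains to be constructed. I plan to read this off from the invariance: for any $A \in [\m,\m]_\h$ and $x, y \in \m$, the $\ad(x)$-invariance of $\overline{g}$ combined with the orthogonality forces
\[
\overline{g}(A, [x,y]_\h) = \overline{g}(A, [x,y]) = -\overline{g}([x, A], y) = g([A, x], y),
\]
where the last step uses $[x, A] \in \m$. Since $[\m,\m]_\h$ is spanned by elements $[x,y]_\h$, this formula pins $\overline{g}$ down uniquely, which simultaneously settles uniqueness. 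The central obstacle is the \emph{well-definedness} of the formula: the right-hand side a priori depends on the choice of representation of $[x,y]_\h$ as a bracket. Equivalently, one needs to establish the symmetry
\[
g([[a,b]_\h, x], y) = g([[x,y]_\h, a], b),
\]
after which linearity of the bracket map $(a \wedge b) \mapsto [a,b]_\h$ delivers well-definedness in both slots. I plan to obtain this symmetry from the Jacobi identity in $\g$ for triples in $\m$, combined with the observation that $(a,b,c) \mapsto g([a,b]_\m, c)$ is totally skew-symmetric on $\m$ (the three-form content of the naturally reductive condition): two Jacobi relations, paired with an appropriate test vector, produce matching $[\cdot,\cdot]_\h$-terms while the $[\cdot,\cdot]_\m$-contributions cancel via the three-form property.

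Once the form is constructed, $\ad(\mf{k})$-invariance on $\m\times\m$ recovers the naturally reductive condition, on $[\m,\m]_\h\times\m$ it reduces to the defining formula together with the skew-symmetry of $\ad([\m,\m]_\h)$ on $\m$, and on $[\m,\m]_\h\times[\m,\m]_\h$ it follows from a second application of Jacobi in $\g$ combined with that same skew action. Non-degeneracy is a direct consequence of effectiveness: if $\overline{g}(A, \cdot)$ vanishes on $[\m,\m]_\h$ then $g([A, u], v) = 0$ for all $u, v \in \m$, hence $\ad(A)|_\m = 0$ and $A = 0$. For the converse direction, given any $\ad(\g)$-invariant non-degenerate symmetric $\overline{g}$ with $\m = \h^\perp$ and $g := \overline{g}|_{\m\times\m}$ positive definite, invariance yields $\overline{g}([h, m], h') = -\overline{g}(m, [h, h']) = 0$ for all $h, h' \in \h$ and $m \in \m$, so $[h, m] \in \h^\perp = \m$, proving the reductive inclusion; and the naturally reductive identity $g([x,y]_\m, z) + g(y, [x,z]_\m) = 0$ is the $\m$-component of $\overline{g}([x,y], z) + \overline{g}(y, [x,z]) = 0$ for $x, y, z \in \m$, after using $[\m,\m]_\h \perp \m$.
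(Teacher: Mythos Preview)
The paper does not supply its own proof of this theorem; it is quoted as a classical result of Kostant (with a pointer to D'Atri--Ziller), so there is no in-paper argument to compare against. Your proposal follows the standard line and is correct. The ideal check, the forcing of $\overline{g}$ on $[\m,\m]_\h\times[\m,\m]_\h$ by invariance, the non-degeneracy via effectiveness, and the converse are all fine. The step you single out as the ``central obstacle''---the pair symmetry
\[
g([[a,b]_\h,x],y)=g([[x,y]_\h,a],b)
\]
---does follow exactly along the lines you sketch: take the $\m$-component of the Jacobi identity for each of the four triples chosen from $\{a,b,x,y\}$, pair with the remaining vector, and form the alternating sum; the $\h$-bracket terms collapse to $2g([[a,b]_\h,x],y)-2g([[x,y]_\h,a],b)$ via the two skew symmetries, while the $[\cdot,\cdot]_\m$-contributions cancel because $g([[u,v]_\m,w]_\m,z)=g([u,v]_\m,[w,z]_\m)$ is symmetric under $(u,v)\leftrightarrow(w,z)$, which is precisely the three-form property of the torsion. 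One tacit assumption worth making explicit: you use $\ad(\h)\subset\so(\m)$ to get skewness of $g([[a,b]_\h,\cdot],\cdot)$ in the last two slots; in the paper's conventions this is built into the phrase ``$\ad:\h\to\so(\m)$'' in the definition of effectiveness, so there is no gap.
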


This motivates the following definitions.

\begin{definition}\label{def:nr pair}
	A Lie algebra $\g$ together with a subalgebra $\h\subset \g$ define a \emph{naturally reductive pair} $(\g,\h)$ if there exists an $\ad(\g)$-invariant non-degenerate symmetric bilinear form $\overline{g}$  for which $\overline{g}|_{\m\times \m}$ is positive definite, where $\m = \h^\perp$ and such that $\g$ is the transvection algebra of the corresponding naturally reductive decomposition.
\end{definition}

\begin{definition}\label{def:dual pair}
	A naturally reductive pair $(\g^*,\h^*)$ is a \emph{partial dual} of a naturally reductive pair $(\g,\h)$ when $\g^*$ is a real form of $\g \otimes \C$ different from $\g$ and the complexified Lie algebra pairs are isomorphic: $(\g\otimes  \C,\h\otimes  \C)\cong (\g^*\otimes \C,\h^*\otimes \C)$.
\end{definition}

\begin{remark}\label{rem:recall dual}
	In \cite{Storm2018a} it is shown that every naturally reductive pair of type I admits exactly one compact partial dual pair of type I. Moreover, for spaces of type II there exists exactly one partial dual pair for which the semisimple part of the canonical base space is compact. 
	Moreover, a partial dual of naturally reductive pair $(\g,\h)$ of type I exists if and only if $(\g_i,\mbox{proj}_{\g_i}(\h))$ is a symmetric pair for some $i$, where $\g = \g_1\oplus \g_2 \oplus \dots \oplus \g_n$ is the decomposition of $\g$ into simple ideals.	
	
	We will use this partial duality to deflate our classification list and make it more comprehensive.
\end{remark}

In \cite{Storm2018} a new construction of naturally reductive spaces is defined. This starts from a naturally reductive transvection algebra and a certain subalgebra $\mf k$ of derivations of the transvection algebra together with an $\ad(\mf k)$-invariant metric $B$ on it and constructs a new naturally reductive decomposition. This new decomposition we call the \textit{$(\mf k,B)$-extension} and generally this new space is a homogeneous fiber bundle over the original space.
More explicitly, for $\g=\h\oplus \m$ a non-zero transvection algebra the algebra $\mf k$ of derivations has to be  a subalgebra of
\begin{equation}\label{eq:s(g) definition}
\s(\g) := \{f\in \Der(\g):f(\h)=\{0\},~f(\m)\subset \m,~f|_\m\in \so(\m)\}.
\end{equation}
If $\g=\{0\}$, then we define $\s(\{0\}) = \so(\infty)$.
For every finite dimensional subalgebra $\mf k\subset \s(\g)$ with an $\ad(\mf k)$-invariant metric $B$ on $\mf k$ a new naturally reductive decomposition is obtained which is called to $(\mf k,B)$-extension, see \cite{Storm2018}. 

One can imagine that spaces of type I are relatively easy to classify. The theory developed in \cite{Storm2018a} together with the construction of \cite{Storm2018} allow us to classify all remaining naturally reductive spaces. The main reason for this is the following slightly rephrased result from \cite{Storm2018a}.

\begin{theorem}[{\cite[Thm.~4]{Storm2018a}}]\label{thm:canonical base}
	For every naturally reductive decomposition of type II there exists a unique naturally reductive transvection algebra of the form
	\begin{equation}\label{eq:unique base decomp}
	\g=\h\oplus \m \oplus_{L.a.} \R^n,
	\end{equation}
	with $\h\oplus \m$ a semisimple algebra and $\oplus_{L.a.}$ denotes the direct sum of Lie algebras, such that the original type II decomposition is a $(\mf k,B)$-extension of $\g=\h\oplus \m \oplus_{L.a.} \R^n.$
\end{theorem}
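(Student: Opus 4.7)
The plan is to recover the canonical base intrinsically from the given type~II transvection algebra $\tilde\g=\tilde\h\oplus\tilde\m$. Kostant's theorem (\Cref{thm:kostant}) supplies an $\ad(\tilde\g)$-invariant non-degenerate symmetric form $\bar g$ on $\tilde\g$ extending the metric on $\tilde\m$, and the Levi--Mal'cev decomposition $\tilde\g=\mf l\ltimes\mf r$ separates the semisimple and solvable content. The hypothesis that $\tilde\g$ is a transvection algebra (together with effectivity) will be what forces the radical to have the special shape required for the base.

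The central structural step is a refined analysis of how $\mf r$ and $\mf l$ interact with the reductive splitting $\tilde\g=\tilde\h\oplus\tilde\m$. I would show that $\mf r\cap\tilde\m$ is an abelian ideal of $\tilde\g$ --- this becomes the $\R^n$ factor of the canonical base --- while $\mf r\cap\tilde\h$ is abelian and contributes the solvable part of the extension data. In parallel, the centralizer $\mf l_0:=C_{\mf l}(\mf r)$ is a semisimple ideal of $\tilde\g$ (both Jacobi and a short computation on brackets show this), and with its induced reductive splitting $\mf l_0=(\mf l_0\cap\tilde\h)\oplus(\mf l_0\cap\tilde\m)=:\h\oplus\m$ it provides the semisimple factor of the canonical base $\g=\h\oplus\m\oplus_{L.a.}\R^n$. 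The remaining part of $\mf l$, together with $\mf r\cap\tilde\h$, assembles into a subalgebra $\mf k$ acting on $\g$ by killing $\h\oplus\m$ and acting skew-symmetrically on $\R^n$ in accordance with \eqref{eq:s(g) definition}, so that $\mf k\subset\s(\g)$.

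Setting $B$ to be the pullback of $\bar g$ to $\mf k$, it then remains to verify that the bracket of $\tilde\g$ agrees with the defining bracket of the $(\mf k,B)$-extension of $\g$ from \cite{Storm2018}; this is a direct comparison on the building blocks $\h\oplus\m$, $\R^n$, and $\mf k$. For uniqueness, note that each of $\mf r\cap\tilde\m$, $\mf r\cap\tilde\h$, and the centralizer of $\mf r$ inside any Levi complement is an intrinsic invariant of $\tilde\g$ (invariance of the last is because all Levi complements are conjugate), so the canonical base they determine is unique up to isomorphism.

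The main obstacle I anticipate is the structural analysis of the radical: establishing abelianness of $\mf r\cap\tilde\m$ and $\mf r\cap\tilde\h$ and ruling out any additional non-abelian nilpotent content in $\mf r$ that would spoil the shape $\h\oplus\m\oplus_{L.a.}\R^n$ of the base. This requires the combined use of the transvection identity $[\tilde\m,\tilde\m]_{\tilde\h}=\tilde\h$, effectivity of the naturally reductive decomposition, and the $\ad$-invariance of Kostant's form $\bar g$; weakening any of these three ingredients permits Lie-algebraic counterexamples with more complicated radicals, and the interplay between all three is what ultimately carries the theorem.
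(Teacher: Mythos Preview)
The paper does not prove this theorem; it is quoted verbatim from \cite[Thm.~4]{Storm2018a}, so there is no in-paper argument to compare your proposal against.

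That said, your outline contains a genuine structural error. You assert that the subalgebra $\mf k$ you extract ``acts on $\g$ by killing $\h\oplus\m$ and acting skew-symmetrically on $\R^n$ in accordance with \eqref{eq:s(g) definition}''. But \eqref{eq:s(g) definition} says only that elements of $\s(\g)$ annihilate $\h$; they are allowed to act nontrivially (and skew-symmetrically) on $\m$. \Cref{def:k_i and b_i and varphi} makes this explicit by decomposing $\mf k=\mf k_1\oplus\mf k_2\oplus\mf k_3$ according to whether the action hits $\m$ alone, both factors, or $\R^n$ alone, and $\mf k_1$ is in general nonzero. A clean instance is the base $\C P^2\times S^2$ treated in \Cref{sec:type II dim 7}: there $n=0$, so $\mf k=\mf k_1$ acts entirely on the semisimple $\m$ and not on any Euclidean factor at all.

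This error is not cosmetic; it undermines your recovery procedure. When $\mf k_1\neq\{0\}$ the $(\mf k,B)$-extension genuinely twists the semisimple piece, so the Levi--Mal'cev splitting of $\tilde\g$ together with the centralizer $C_{\mf l}(\mf r)$ will not hand you back $\h\oplus\m$ cleanly, nor will ``the remaining part of $\mf l$'' coincide with $\mf k$. (In the $\C P^2\times S^2$ example the radical does not see $\R^n$ at all since $n=0$, yet a nontrivial $\mf k$ must still be recovered.) Any correct proof has to disentangle the base's semisimple factor from the $\mf k_1$-contribution by a finer mechanism than the radical/centralizer dichotomy you propose; this is precisely where conditions such as $\pi_\m(\mc Z(\b_1))=\{0\}$ in \Cref{lem:canonical base iff} enter, and your sketch does not account for them.
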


\begin{definition}
	The unique naturally reductive decomposition from \Cref{thm:canonical base} in \eqref{eq:unique base decomp} is called the \emph{canonical base space} of the type II space.
\end{definition}

The following propositions will simplify the classification procedure tremendously, see \cite{Storm2018a} for the proofs.
We require that all of the spaces we list in the classification are irreducible. The first proposition deals with this for all spaces of type II.

\begin{proposition}\label{lem:reducibility criteria}
	Let $\g = \h \oplus \m \oplus_{L.a.} \R^n$ be a naturally reductive transvection algebra with $\h\oplus \m$ semisimple. Furthermore, let $\mf k\subset \s(\g)$ and let $B$ be some $\ad(\mf k)$-invariant inner product on $\mf k$. Consider the following decomposition
	\begin{equation}\label{eq:irreducible factor}
	\g=(\h_1\oplus\m_1)\oplus_{L.a.} \dots \oplus_{L.a.} (\h_p \oplus \m_p) \oplus_{L.a.} \m_{p+1} \oplus_{L.a.} \dots \oplus_{L.a.} \m_{p+q},
	\end{equation} 
	where $\h_i\oplus \m_i$ is an irreducible naturally reductive decomposition with $\h_i\subset \h$ and $\m_i\subset \m$ for $i=1,\dots, p$ and $\m_{p+j}\subset \R^n$ is an irreducible $\mf k$-module for $j=1,\dots, q$. We choose the $\m_1,\dots ,\m_{p+q}$ mutually orthogonal. Suppose that $\g=\h\oplus \m\oplus_{L.a.} \R^n$ is the canonical base space of the $(\mf k,B)$-extension. The $(\mf k,B)$-extension is reducible if and only if there exists a non-trivial partition:
	\[
	\{\m_1,\dots,\m_p,\m_{p+1},\dots,\m_{p+q}\}=W'\cup W'', \quad W'\cap W'' = \emptyset,
	\]
	and an orthogonal decomposition of ideals $\mf k=\mf k'\oplus \mf k''$ with respect to $B$ such that $\mf k'$ acts trivially on all elements of $W''$ and $\mf k''$ acts trivially on all elements of $W'$.
\end{proposition}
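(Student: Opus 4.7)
The plan is to unravel both directions via the explicit structure of the $(\mf k,B)$-extension from \cite{Storm2018}. Write $\tilde{\g}=\tilde{\h}\oplus\tilde{\m}$ for the extended naturally reductive transvection algebra; its tangent space decomposes canonically as $\tilde{\m}=\m\oplus\R^n\oplus\mf k$ with metric $g\oplus B$, and its isotropy contains a copy of $\mf k$ alongside $\h$. The structural facts I will use are that $\h$ acts trivially on $\R^n\oplus\mf k$, while $\mf k$ preserves each $\m_i$ and $\m_{p+j}$ as a skew-symmetric derivation and kills $\h$. Since $\tilde{\g}$ is a transvection algebra, Riemannian reducibility of the $(\mf k,B)$-extension is equivalent to the existence of a non-trivial orthogonal splitting $\tilde{\m}=\tilde{\m}'\oplus\tilde{\m}''$ with $[\tilde{\m}',\tilde{\m}'']_{\tilde{\g}}=0$: the vanishing of the $\tilde{\m}$-component expresses that the torsion respects the splitting, and the vanishing of the $\tilde{\h}$-component forces the curvature action of one factor to annihilate the other, producing a de Rham decomposition via Kostant (\Cref{thm:kostant}).

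For the ``if'' direction, I would set $\m':=\bigoplus_{\m_i\in W'}\m_i$, $\m'':=\bigoplus_{\m_j\in W''}\m_j$ and $\tilde{\m}':=\m'\oplus\mf k'$, $\tilde{\m}'':=\m''\oplus\mf k''$. Orthogonality is built in. The bracket condition $[\tilde{\m}',\tilde{\m}'']=0$ splits into three contributions: $[\m',\m'']=0$ since \eqref{eq:irreducible factor} is a Lie-algebra direct sum; $[\mf k',\m'']=0$ and $[\mf k'',\m']=0$ by the trivial-action hypothesis; and $[\mf k',\mf k'']_{\tilde{\g}}=0$ because $\mf k=\mf k'\oplus\mf k''$ is an orthogonal decomposition into ideals, so the corresponding bracket in the extension formulas vanishes.

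For the converse, I would begin from a non-trivial orthogonal splitting $\tilde{\m}=\tilde{\m}'\oplus\tilde{\m}''$ with $[\tilde{\m}',\tilde{\m}'']=0$. Each $\m_i$ with $i\leq p$ is irreducible as an $\h_i$-module, and $\h_i$ acts trivially on all other summands of $\tilde{\m}$; consequently the projection of $\tilde{\m}'$ onto $\m_i$ is $\h_i$-invariant, which by irreducibility forces $\m_i$ to sit entirely in one of $\tilde{\m}',\tilde{\m}''$. The same argument applied to the $\mf k$-irreducible modules $\m_{p+j}$ handles the abelian part and yields the partition $\{\m_1\.\m_{p+q}\}=W'\cup W''$. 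Define $\mf k':=\tilde{\m}'\cap\mf k$ and $\mf k'':=\tilde{\m}''\cap\mf k$; one obtains $\mf k=\mf k'\oplus\mf k''$ with $B$-orthogonality automatic. That $\mf k',\mf k''$ are ideals follows from $[\mf k',\mf k'']=0$ together with $\ad(\mf k)$-invariance of $B$, and the trivial cross-actions are read off directly from $[\mf k',\m'']=0=[\mf k'',\m']$.

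The main obstacle I anticipate lies in this converse step: the $(\mf k,B)$-extension realises $\mf k$ inside $\tilde{\g}$ via a graph-type embedding whose image has components both in $\tilde{\m}$ and in $\tilde{\h}$, so one must invoke the explicit formulas from \cite{Storm2018} to justify that the $\mf k$-summand of $\tilde{\m}$ is canonically identified with $\mf k$ itself and that intersecting with $\tilde{\m}',\tilde{\m}''$ produces genuine ideals of $\mf k$ rather than merely $B$-orthogonal subspaces. Once this identification is in place, the remaining verifications are direct computations in the Nomizu algebra.
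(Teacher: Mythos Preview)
The paper does not actually prove this proposition; it simply cites \cite{Storm2018a} for the proof. So there is no in-paper argument to compare against, and I can only evaluate your proposal on its own terms.

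Your ``if'' direction is fine: once the partition and the $B$-orthogonal ideal splitting $\mf k=\mf k'\oplus\mf k''$ are given, the verification that $\tilde\m'=\m'\oplus\mf k'$ and $\tilde\m''=\m''\oplus\mf k''$ are orthogonal and satisfy $[\tilde\m',\tilde\m'']=0$ is a direct check against the extension formulas.

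The converse has a real gap, and it is not the one you flag. You assert that each $\m_i$ with $i\le p$ is irreducible as an $\h_i$-module. That is false in general: an irreducible naturally reductive decomposition $\h_i\oplus\m_i$ need not have irreducible isotropy representation. The extreme case is a simple compact Lie group with bi-invariant metric, where $\h_i=\{0\}$ and $\m_i$ is the whole Lie algebra; more generally, many of the type~I spaces in \Cref{tab:7-dim type I} have reducible isotropy representation. So the step ``the projection of $\tilde\m'$ onto $\m_i$ is $\h_i$-invariant, hence by irreducibility $\m_i$ lies entirely in one factor'' does not go through. What you must use instead is the irreducibility of the naturally reductive decomposition $\h_i\oplus\m_i$ itself, i.e.\ \Cref{lem:reducible iff ideals}: you need to produce from $\tilde\m',\tilde\m''$ an orthogonal ideal decomposition of $\h_i\oplus\m_i$, and this requires first knowing that $\tilde\m'$ and $\tilde\m''$ already respect the block decomposition $\tilde\m=\m_1\oplus\dots\oplus\m_{p+q}\oplus\mf k$, not just that their projections to $\m_i$ are $\h_i$-stable.

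This is exactly the same issue as the one you do anticipate for the $\mf k$-summand: writing $\mf k'=\tilde\m'\cap\mf k$ and $\mf k''=\tilde\m''\cap\mf k$ and asserting $\mf k=\mf k'\oplus\mf k''$ presupposes that the splitting is block-diagonal with respect to $\m\oplus\R^n\oplus\mf k$. Establishing that block-diagonality is the crux of the proof and is where the hypothesis that $\g$ is the \emph{canonical} base space enters; your outline does not use that hypothesis anywhere, which is a sign that the argument as written cannot be complete.
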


We also need to recall some definitions from \cite{Storm2018}.

\begin{definition}\label{def:k_i and b_i and varphi}
	Let $(\g=\h \oplus \m\oplus_{L.a.}\R^n,g)$ be a as in \eqref{eq:unique base decomp} Let $\mf k\subset \s(\g)$ be a Lie subalgebra and let $B$ be an $\ad(\mf k)$-invariant inner product on $\mf k$. Let $\varphi:\mf k\to \so(\m)$ be the natural Lie algebra representation and let $\psi:\mf k \to \so(\mf k\oplus\m)$ be the Lie algebra representation $\psi:=\ad\oplus \varphi$.

	Then we define $\varphi_1:\mf k\to\so(\m)$ and $\varphi_2:\mf k \to \so(\R^n)$ to be the restricted representations of $\varphi:\mf k\to \so(\m)$. Next we put
	\[
	\mf k_1 := \ker(\varphi_2),\quad \mf k_3 := \ker(\varphi_1),\quad \mf k_2 := (\mf k_1\oplus \mf k_3)^\perp \subset \mf k,
	\]
	where the orthogonal complement is taken with respect to $B$.
	Furthermore, recall that $\mf s(\g) \cong \mc{Z}(\h)\oplus \mf p\oplus \so(n)$, where $\mf p := \{m\in \m:[h,m]=0,~\forall h\in \h\}$ and $\mc{Z}(\h)$ denotes the center of $\h$. In this way we identify $\mf k_1\oplus \mf k_2\subset \Aut(\h\oplus \m)$ with inner derivations: $\b_1\oplus \b_2 \subset \mc{Z}(\h)\oplus \mf p\subset \h\oplus \m$.
\end{definition}

All the spaces of type II are constructed as $(\mf k,B)$-extensions. Generically a $(\mf k,B)$-extension results in a space of type II. However, in general it does not. The following proposition guaranties that all of the $(\mf k,B)$-extensions we list are of type II. This is to assure that none of the spaces we list are isomorphic.

\begin{proposition}\label{lem:canonical base iff}
	Let $\mf f$ be the transvection algebra of a $(\mf k,B)$-extension of a naturally reductive transvection algebra of the form $\g=\h\oplus \m \oplus_{L.a.} \R^n$. The canonical base space of $\mf f$ is isomorphic to $\g=\h\oplus \m \oplus_{L.a.} \R^n$ if and only if the following hold
	\begin{enumerate}[$(i)$]
		\item $\pi_\m(\mc Z(\b_1))=\{0\}$,
		\item $\ker(R|_{\ad(\h)+\psi(\mf k)})=\{0\}$,
	\end{enumerate}
	where $\pi_{\m}$ denotes the projection onto $\m$ and $R$ is the curvature tensor associated to $\mf f$.
\end{proposition}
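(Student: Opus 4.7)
The plan is to describe the transvection algebra $\mf f$ of the $(\mf k,B)$-extension explicitly and compare its canonical base, uniquely determined by \Cref{thm:canonical base}, against $\g$. By the construction in \cite{Storm2018}, the Nomizu Lie algebra of the extension has reductive isotropy contained in $\ad(\h)\oplus\psi(\mf k)$ acting on a tangent space built from $\m$, $\R^n$, and a copy of $\mf k_2$. Since a transvection algebra is generated by $\im(R)$, the natural candidate isotropy of $\mf f$ is $\ad(\h)+\psi(\mf k)$, and condition $(ii)$ is precisely the non-degeneracy that ensures this candidate is realised faithfully.

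I would argue necessity of $(ii)$ as follows: if $0\neq \xi\in\ker(R|_{\ad(\h)+\psi(\mf k)})$, then $\xi$ does not contribute to the isotropy $\im(R)$ of $\mf f$, and a dimension count together with Kostant's theorem (\Cref{thm:kostant}) forces either the semisimple factor of the canonical base to shrink below $\h\oplus\m$ or the Euclidean factor to grow beyond $\R^n$. For necessity of $(i)$, the identification of \Cref{def:k_i and b_i and varphi} lets one view $\mc Z(\b_1)\subset \h\oplus\m$ as the center of $\mf k_1\cong\b_1$; a nonzero $z\in\mc Z(\b_1)$ with $\pi_\m(z)\neq 0$ then yields a tangential direction in $\mf f$ commuting with $\h$ (since $\pi_\m(z)\in\p$), with $\mf k_1$ (by centrality of $z$), with $\mf k_3$ (since $\mf k_3$ annihilates $\m$), and with $\R^n$ (since $\mf k_1=\ker\varphi_2$); the $\ad$-invariance of $B$ and the orthogonal decomposition $\mf k=\mf k_1\oplus \mf k_2\oplus \mf k_3$ then yield commutativity with $\mf k_2$ as well. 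Such a central tangent direction is an extra abelian ideal which Kostant's theorem extracts as a Euclidean summand of the canonical base, enlarging it beyond $\R^n$.

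Conversely, assuming both $(i)$ and $(ii)$, I would combine the faithful realisation of $\ad(\h)+\psi(\mf k)$ as the isotropy of $\mf f$ (from $(ii)$) with the absence of central $\m$-directions (from $(i)$) to conclude that Kostant's theorem applied to $\mf f$ produces a decomposition matching \eqref{eq:unique base decomp}, with semisimple factor $\h\oplus\m$ and Euclidean factor $\R^n$; the uniqueness in \Cref{thm:canonical base} then identifies this canonical base with $\g$. The main obstacle will be the book-keeping in the necessity of $(i)$: precisely tracking how $\pi_\m(\mc Z(\b_1))$ surfaces as a central tangential ideal in $\mf f$ via the extended Nomizu bracket of \cite{Storm2018}, and how Kostant's decomposition then peels it off into the Euclidean factor of the canonical base.
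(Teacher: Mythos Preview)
The paper does not prove this proposition; it explicitly defers the argument to \cite{Storm2018a} (``see \cite{Storm2018a} for the proofs''), so there is no in-paper proof against which to compare your proposal.

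That said, your outline is aimed in the right direction but remains a sketch. A few remarks. For the necessity of $(i)$, your claim that $\mf k_2$ commutes with $\mc Z(\mf k_1)$ is correct, and the reason is stronger than you indicate: since $\mf k_1=\ker(\varphi_2)$ and $\mf k_3=\ker(\varphi_1)$ are both ideals of $\mf k$, the $B$-orthogonal complement $\mf k_2$ is itself an ideal, so $[\mf k_2,\mf k_1]=0$ outright. However, the heart of the argument is not simply that $\pi_\m(z)$ commutes with everything in sight; you must show that in the extended Nomizu bracket the element $k-\pi_\m(z)$ (with $k\in\mc Z(\mf k_1)$ corresponding to $z\in\b_1$) has vanishing torsion against the full tangent space of the extension, so that it genuinely splits off as an abelian ideal. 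This requires unpacking the torsion formula of the $(\mf k,B)$-extension, not just commutation relations in $\g$. Your closing sentence acknowledges this gap.

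For sufficiency, your argument that $(ii)$ forces $\im(R)=\ad(\h)+\psi(\mf k)$ is reasonable, but you still need to verify that the resulting transvection algebra has semisimple part exactly $\h\oplus\m$ and abelian radical exactly $\R^n$; condition $(i)$ is what rules out hidden abelian directions inside $\m$, and this step needs the explicit bracket of the extension rather than a dimension count alone. In short, the strategy is sound, but the proof lives in the details of the construction in \cite{Storm2018}, which you have not yet carried out.
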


For \Cref{lem:canonical base iff} we need to be able to compute $\ker(R)$ the following lemma simplifies this.

\begin{lemma} \label{lem:ker(R) characterisation}
	Let $\g=\h\oplus \m\oplus_{L.a.} \R^n$ be a naturally reductive transvection algebra. Let $\mf k\subset \s(\g)$ and let $B$ be an $\ad(\mf k)$-invariant inner product on $\mf k$. Let $(T,R)$ be the infinitesimal model of the $(\mf k,B)$-extension. 
	Then
	\[
	\ad(\h^{ss})\oplus\ad(\mf k^{ss})=\ad(\h^{ss}\oplus \mf k^{ss})\subset \im(R) \quad\mbox{and}\quad \ker(R) \subset \ad(\mc Z(\h\oplus\mf k)),
	\]
	where $\g^{ss}$ denotes the semisimple commutator ideal of a Lie algebra $\g$ and $\mc{Z}(\g)$ denotes the center of $\g$. Moreover, if $\mf k_1=\{0\}$, then $\ker(R)=\{0\}$.
\end{lemma}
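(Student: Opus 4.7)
The strategy is to combine the explicit curvature formulas of the $(\mf k,B)$-extension from \cite{Storm2018} with the reductive structure of its new isotropy algebra $\mf h_{\mf f}:=\ad(\h)+\psi(\mf k)$. The first step is to observe that $\mf h_{\mf f}=\ad(\h)\oplus\psi(\mf k)$ as a direct sum of Lie algebras and is reductive: every $f\in\mf k\subset\s(\g)$ annihilates $\h$, so $\ad(\h)$ and $\psi(\mf k)$ commute; $\h$ is reductive as a subalgebra of the semisimple $\h\oplus\m$; and $\mf k$ is reductive because $B$ is an invariant inner product on it. Thus $\mf h_{\mf f}^{ss}=\ad(\h^{ss})\oplus\psi(\mf k^{ss})$ and $\mc Z(\mf h_{\mf f})=\ad(\mc Z(\h\oplus\mf k))$. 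Since $\nabla R=0$, the subspace $\im(R)$ is $\mf h_{\mf f}$-invariant, hence an ideal of $\mf h_{\mf f}$; and the $\ker(R)$ appearing in the lemma is the orthogonal complement of $\im(R)$ in $\mf h_{\mf f}$.

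For the first inclusion, I would use two features of the curvature formula. For $x,y$ in the old reductive complement $\m\oplus\R^n$, the $\ad(\h)$-component of $R(x,y)$ equals the base value $-\ad([x,y]_\h)$; since $\g$ is by hypothesis the transvection algebra of its canonical base, these span all of $\ad(\h)$. Evaluating $R$ on two vectors coming from the new $\mf k$-direction in the reductive complement yields $-\psi([k,k']_{\mf k})$, spanning $\psi([\mf k,\mf k])=\psi(\mf k^{ss})$. Hence the projection of $\im(R)$ onto $\mf h_{\mf f}^{ss}$ is surjective, and to upgrade this to actual containment one picks, for any $X\in\mf h_{\mf f}^{ss}$, a lift $Y\in\im(R)$ with $Y-X\in\mc Z(\mf h_{\mf f})$; then $[H,X]=[H,Y]\in\im(R)$ for every $H\in\mf h_{\mf f}$, and perfectness of the semisimple part gives $\mf h_{\mf f}^{ss}=[\mf h_{\mf f},\mf h_{\mf f}^{ss}]\subset\im(R)$. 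The second inclusion $\ker(R)\subset\ad(\mc Z(\h\oplus\mf k))$ is then immediate as the orthogonal of $\mf h_{\mf f}^{ss}$.

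The hardest step is the last claim: if $\mf k_1=\{0\}$ we must further show $\mc Z(\mf h_{\mf f})\subset\im(R)$. The hypothesis $\mf k_1=\ker(\varphi_2)=\{0\}$ means $\varphi_2\colon\mf k\to\so(\R^n)$ is injective, so in particular $\mc Z(\mf k)$ acts faithfully on $\R^n$. I would inspect $R$ on pairs $(u,v)\in\R^n\times\R^n$: in the base $R_0$ vanishes here, so $R(u,v)$ is purely the $(\mf k,B)$-correction, which by the formula from \cite{Storm2018} lies in $\psi(\mf k)$ and is determined by the pairing of $u\wedge v$ with $\varphi_2$ through the $B$-duality; injectivity of $\varphi_2$ then ensures these values exhaust all of $\psi(\mf k)$. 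In particular $\psi(\mc Z(\mf k))\subset\im(R)$. Combined with $\pi_{\ad(\h)}(\im(R))=\ad(\h)$ (from the previous step), one recovers $\ad(\mc Z(\h))\subset\im(R)$ after subtracting the now-available $\psi(\mf k)$-contribution, and hence $\mf h_{\mf f}=\im(R)$, i.e.\ $\ker(R)=\{0\}$. The key technical content is the explicit curvature calculation on $\R^n\times\R^n$ using the construction of \cite{Storm2018}.
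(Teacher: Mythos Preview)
The paper does not actually prove this lemma; it is one of several results imported verbatim from \cite{Storm2018a} (see the sentence ``The following propositions will simplify the classification procedure tremendously, see \cite{Storm2018a} for the proofs'' preceding \Cref{lem:reducibility criteria}). So there is no in-paper proof to compare against, and I can only assess your argument on its own merits.

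Your strategy is sound and the main steps go through, but one early claim is false as stated and you should rephrase it. You assert that $\mf h_{\mf f}=\ad(\h)\oplus\psi(\mf k)$ is a \emph{direct} sum of Lie algebras. In general it is not: whenever $\mf k$ is abelian and sits inside $\mc Z(\h)$ via the identification $\s(\g)\cong\mc Z(\h)\oplus\p\oplus\so(n)$ (as in the $\C P^2\times S^2$ example in the paper, where $\varphi(k)=c_1\ad(h_4)|_\m+c_2\ad(h_5)|_\m$), one has $\psi(\mf k)\subset\ad(\h)$ and the sum collapses. What \emph{is} true, and is all you actually use, is that $\ad(\h)$ and $\psi(\mf k)$ commute, that $\mf h_{\mf f}$ is reductive with commutator ideal $\ad(\h^{ss})\oplus\psi(\mf k^{ss})$ (this \emph{is} a direct sum, since any nonzero $k\in\mf k^{ss}$ acts nontrivially on $\mf k$ while $\ad(\h)$ does not), and that $\mc Z(\mf h_{\mf f})=\ad(\mc Z(\h))+\psi(\mc Z(\mf k))$. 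Likewise, ``the $\ad(\h)$-component of $R(x,y)$'' is ill-defined without a direct sum; the correct and sufficient statement is $R(x,y)-R_0(x,y)\in\psi(\mf k)$ for $x,y$ in the old complement. With these reformulations your perfectness argument (lift modulo the centre, bracket back into $\im(R)$) works cleanly.

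Your treatment of the final claim is correct: once you know the explicit form of the $(\mf k,B)$-correction on $\Lambda^2\R^n$, namely $R(u\wedge v)=\sum_{i,j}(B^{-1})_{ij}\langle\varphi_2(k_i)u,v\rangle\,\psi(k_j)$, the span of these values equals $\psi(\mf k)$ precisely when $\varphi_2$ is injective, i.e.\ $\mf k_1=\{0\}$; then $\ad(\h)\subset\im(R)+\psi(\mf k)=\im(R)$ and you are done. Just be explicit that the curvature formula you are invoking (base curvature plus $\sum(B^{-1})_{ij}\psi(k_i)\odot\psi(k_j)$) is exactly the one exhibited in the paper's worked examples, and that this is what \cite{Storm2018} supplies.
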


Finally we need to be able to decide whether two spaces of type II are isomorphic. The following proposition does exactly this.

\begin{proposition}\label{prop:iso type II}
	Let $\g_i=\h_i \oplus \m_i = \h_i\oplus \m_{0,i}\oplus_{L.a.}\R^{n_i}$ be naturally reductive transvection algebras with $\h_i\oplus \m_{0,i}$ semisimple or $0$-dimensional for $i=1,2$. Suppose $\g_i=\h_i\oplus \m_i$ is the canonical base space of some $(\mf k_i,B_i)$-extension for $i=1,2$ and that the $(\mf k_1,B_1)$-extension and $(\mf k_2,B_2)$-extension are isomorphic. Then there is a Lie algebra isomorphism $\tau:\g_1  \to \g_2$.
	Furthermore, $\tau(\h_1) = \h_2$, $\tau|_{\m_1}:\m_1\to \m_2$ is an isometry and $\tau_*:\mf k_1\to \mf k_2$ is an isometry, where $\tau_*:\Der(\g_1)\to\Der(\g_2)$ is the induced map on derivations.
\end{proposition}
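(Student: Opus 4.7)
The plan is to reduce everything to the uniqueness of the canonical base space from \Cref{thm:canonical base} and then track how the $\mf k$-data is intrinsically encoded inside the extension so that any isomorphism $\Phi$ of extensions restricts correctly on $\mf k$. By definition, an isomorphism of the two $(\mf k_i,B_i)$-extensions is a Lie algebra isomorphism $\Phi\colon \mf f_1\to \mf f_2$ of their transvection algebras mapping isotropy to isotropy and restricting to an isometry on the reductive complements. Since both extensions are of type II, \Cref{thm:canonical base} assigns to each a canonical base space which is unique up to isomorphism of naturally reductive transvection algebras. Hence $\Phi$ descends to an isomorphism $\tau\colon \g_1\to \g_2$ which by the defining properties of the canonical base satisfies $\tau(\h_1)=\h_2$ and restricts to an isometry $\m_1\to \m_2$. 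This settles the first two claims.

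For the statement about $\tau_*$, I would use that in the $(\mf k,B)$-extension construction of \cite{Storm2018} the algebra $\mf k\subset \s(\g)\subset \Der(\g)$ is embedded into the transvection algebra $\mf f$ of the extension, with the inner product $B$ entering as the restriction of the reductive-complement metric of $\mf f$ to the $\mf k$-direction. Consequently both the subalgebra $\mf k_i\subset \Der(\g_i)$ and the inner product $B_i$ are intrinsically recoverable from the extension paired with its canonical base. Restricting $\Phi$ to these intrinsically defined subspaces gives a Lie algebra isomorphism $\mf k_1\to \mf k_2$. To see that this restriction agrees with $\tau_*$, I would use that $\Phi$ is a Lie algebra homomorphism that equals $\tau$ on $\g_1$: for $\kappa\in \mf k_1$ and $x\in \g_1$, the identity $\Phi([\kappa,x]_{\mf f_1})=[\Phi(\kappa),\tau(x)]_{\mf f_2}$ unpacks to $\tau(\kappa\cdot x)=\Phi(\kappa)\cdot \tau(x)$, which is exactly the statement that $\Phi(\kappa)=\tau\circ \kappa \circ \tau^{-1}=\tau_*(\kappa)$ as derivations of $\g_2$.

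The main obstacle is cleanly separating the $\mf k_i$-summand from the other summands inside the isotropy and reductive complement of $\mf f_i$, and then showing that $\Phi$ respects this splitting. The cleanest route is to characterize the $\m_i$-summand of the reductive complement of $\mf f_i$ through the base brackets encoded in the canonical base (for instance via the image of $\ad(\h_i)$ combined with \Cref{lem:ker(R) characterisation}), so that the complementary $\mf k_i$-summand is pinned down as well; since $\tau$ is already fixed on $\g_i$ by the canonical base descent, $\Phi$ is then forced to carry the $\mf k_1$-summand onto the $\mf k_2$-summand. Finally, the isometry property for $\tau_*$ follows from the invariance of the Kostant form (\Cref{thm:kostant}) under $\Phi$, which restricts to $B_i$ on each $\mf k_i$-summand.
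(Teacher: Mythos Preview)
The paper does not actually prove this proposition; the sentence preceding \Cref{lem:reducibility criteria} sends the reader to \cite{Storm2018a} for the proofs of this and the neighbouring results. So there is no in-paper argument to compare against directly.

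That said, your outline is the natural one and is presumably close in spirit to what \cite{Storm2018a} does: invoke the uniqueness of the canonical base from \Cref{thm:canonical base} to produce $\tau$, then recover $(\mf k_i,B_i)$ intrinsically from the extension and transport it via $\Phi$. The gap you yourself flag is real, and it occurs earlier than you place it. The sentence ``hence $\Phi$ descends to an isomorphism $\tau$'' is doing too much work: uniqueness of the canonical base tells you that $\g_1$ and $\g_2$ are \emph{abstractly} isomorphic as naturally reductive transvection algebras, but it does not by itself hand you a $\tau$ that is compatible with $\Phi$ in the sense that $\Phi$ restricts to $\tau$ on identified copies of $\g_i$ inside $\mf f_i$. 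Without that compatibility your key computation $\tau(\kappa\cdot x)=\Phi(\kappa)\cdot \tau(x)$ has no meaning, because $\Phi$ and $\tau$ live on different spaces. To close this you must go back to how the canonical base is actually extracted from $\mf f_i$ in \cite{Storm2018a} (via the radical/nilradical structure of the transvection algebra) and show that this extraction is natural under $\Phi$; only then do you get a $\tau$ which is literally a restriction of $\Phi$, after which the rest of your argument goes through.
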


The above proposition also covers type I spaces by considering a type I space as a trivial $(\mf k,B)$-extension over itself. Also note that the isomorphism $\tau$ from the above proposition is necessarily an isometry with respect to the unique invariant non-degenerate symmetric bilinear form of \Cref{thm:kostant}.

\section{Classification of type I \label{sec:Classification type I}}

Now we describe how to classify all naturally reductive decomposition of type I in some dimension $k$. First list all semisimple Lie algebras $\g$ of dimension less or equal to $\frac{1}{2}k(k+1)$. For all of these look for subalgebras $\h\subset \g$ together with all $\ad(\g)$-invariant non-degenerate symmetric bilinear forms $\overline{g}$ on $\g$ such that the following hold:
\begin{enumerate}
	\item $\dim(\g/\h)=k$, \label{item dimension}
	\item $\overline{g}|_{\m\times \m}$ is positive definite, where $\m=\h^\perp$, \label{item non-deg}
	\item the torsion $T$ from \eqref{eq:torsion from lie bracket} is irreducible, \label{item torsion irreducible}
	\item $[\m,\m]_\h = \h$.\label{item transvection}
\end{enumerate}
We will refer to these as conditions 1 to 4, as we will use them regularly. Condition \ref{item transvection} implies that $\g$ is the transvection algebra, see \cite[Lem.~8]{Storm2018a}. 
This produces all irreducible naturally reductive transvection algebras $\g=\h\oplus \m$ of type I and thus, after finding all isomorphic ones, we obtain a classification of all naturally reductive transvection algebras of type I in dimension $k$

\begin{remark} 
	The naturally reductive structures on globally homogeneous spaces are the ones which are regular.  
	To obtain all regular structures we only have to investigate when $H$ is closed in $G$, where $G$ is the simply connected Lie group with Lie algebra $\g$ and $H$ is the connected subgroup with Lie subalgebra $\h$, see \cite{Kowalski1990, Tricerri1992}.
	We mention when the naturally reductive structure at hand is regular for all the cases we discuss.
\end{remark}

The above approach is a very crude and already in dimension 7 and 8 this becomes quite some work. We can make our method more efficient by first looking for an upper bound for the dimension of $\h$. We used above that $\h$ is always a subalgebra of $\so(k)$ and thus $\dim(\h)\leq \frac{1}{2}k(k-1)$. However, since $\h$ is the stabilizer of an irreducible $3$-form $T\in \Lambda^3\m$ we can improve this estimate. In \Cref{tab:stabilizer 3-form} we list stabilizers of irreducible $3$-forms in dimension $3$ to $8$ which are of the largest dimension possible.
\begin{table}[h]
	\[
	\begin{array}{|c | c | c | c | c | c | c|}
	\hline
	k & 3 & 4 & 5 & 6 & 7 & 8 \\
	\hline
	\h & \so(3) & \textrm{n/a} & \mf{u}(2) & \su(3) & \g_2 & \su(3)\\
	\hline
	D_k := \dim(\h) & 3 & \textrm{n/a} & 4 & 8 & 14 & 8\\
	\hline
	\end{array}
	\]
	\captionof{table}{Stabilizers of some irreducible 3-form of the largest dimension possible.}\label{tab:stabilizer 3-form}
\end{table}

\noindent Additionally we can also look for the stabilizer with the second largest dimension $d_k$. In dimension $7$ we find this is $\mf u(3)$, which has dimension $9$. In dimension $8$ we find it has at most dimension $5$. Now we can apply the above approach, but only listing the semisimple Lie algebras up to dimension $k + D_k$ and we also don't have to list semisimple Lie algebra $\g$ with $k+d_k\leq \dim(\g) \leq k+D_k$. This is already a big improvement compared to the initial approach.

The next step is to find all subalgebras of these semisimple Lie algebras, such that the conditions \ref{item dimension}-\ref{item transvection} are satisfied. We do this for every semisimple Lie algebra by listing all reductive algebras $\h$ which satisfy $\dim(\h)=\dim(\g)-k$ and $\mbox{rank}(\h)\leq \min\{ \mbox{rank}(\g),\mbox{rank}(\so(k))\}$. Once we have the list of all such pairs $(\g,\h)$ we have to find all possible injective Lie algebra homomorphisms $\h\to \g$ up to conjugation by an automorphism of $\g$, such that the conditions \ref{item torsion irreducible} and \ref{item transvection} are satisfied for some non-degenerate symmetric bilinear form $\overline{g}$.
For condition \ref{item torsion irreducible} it is often easier to check the condition in the following lemma, see \cite[Lem.~5]{Storm2018a} for a proof.

\begin{lemma}\label{lem:reducible iff ideals}
	Let $\g=\h\oplus \m$ be a naturally reductive transvection algebra. Let $\overline{g}$ be the unique $\ad(\g)$-invariant non-degenerate symmetric bilinear form from Kostant's theorem, see {\rm \Cref{thm:kostant}}. The reductive decomposition $\g=\h\oplus \m$ is reducible if and only if there exist two non-trivial orthogonal ideals $\g_1\subset \g$ and $\g_2\subset \g$ with respect to $\overline{g}$ such that $\g = \g_1\oplus \g_2$ and $\h=\h_1\oplus \h_2$ with $\h_i\subset \g_i$ for $i=1,2$.
\end{lemma}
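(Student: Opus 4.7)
The plan is to prove both implications separately, where ``reducible'' is understood in the sense of the infinitesimal model, meaning there exists a non-trivial orthogonal $\h$-invariant decomposition $\m=\m_1\oplus\m_2$ such that both $T$ and $R$ vanish on mixed arguments, i.e.\ $T(\m_1,\m_2)=0$ and $R(\m_1,\m_2)=0$.

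The easy direction $(\Leftarrow)$ is immediate. Given an orthogonal ideal decomposition $\g=\g_1\oplus\g_2$ with $\h=\h_1\oplus\h_2$ and $\h_i\subset\g_i$, set $\m_i:=\m\cap\g_i$. Orthogonality of the $\g_i$ together with $\m=\h^\perp$ gives $\m=\m_1\oplus\m_2$ with $\m_1\perp\m_2$. The ideal condition yields $[\g_1,\g_2]=0$, so by \eqref{eq:torsion from lie bracket} and \eqref{eq:curvature from lie bracket} both $T$ and $R$ vanish on $\m_1\times\m_2$, while each $\h_i$ clearly preserves $\m_i$.

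For the nontrivial direction $(\Rightarrow)$, start from a reduction $\m=\m_1\oplus\m_2$ and define $\h_i:=[\m_i,\m_i]_\h$ and $\g_i:=\h_i\oplus\m_i$. The first key step is to establish $[\m_1,\m_2]=0$: the hypothesis $T(\m_1,\m_2)=0$ gives $[\m_1,\m_2]\subset\h$, and then $R(\m_1,\m_2)=-\ad([\m_1,\m_2])|_\m=0$ combined with effectiveness of the transvection algebra forces the bracket itself to vanish. Since $\g$ is a transvection algebra, $\h=[\m,\m]_\h=\h_1+\h_2$. Next, using the Jacobi identity and that $T$ preserves the splitting (so $[\m_1,\m_1]_\m\subset\m_1$), one computes for $m,m'\in\m_1$ and $m_2\in\m_2$:
\[
[[m,m']_\h,m_2]=[[m,m'],m_2]-[[m,m']_\m,m_2]=0,
\]
using $[\m_1,\m_2]=0$ twice. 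Hence $[\h_1,\m_2]=0$, and symmetrically $[\h_2,\m_1]=0$. Effectiveness of $\ad|_\h$ then implies $[\h_1,\h_2]=0$ and $\h_1\cap\h_2=0$, so $\h=\h_1\oplus\h_2$ and $\g=\g_1\oplus\g_2$ is a direct sum of commuting ideals.

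What remains is verifying $\overline{g}$-orthogonality of $\g_1$ and $\g_2$. The relations $\m_1\perp\m_2$ and $\h\perp\m$ (the latter from Kostant's theorem) handle three of the four pairings, and the last, $\h_1\perp\h_2$, follows from $\ad$-invariance of $\overline{g}$: for $h_1=[m,m']_\h$ with $m,m'\in\m_1$ and $h_2\in\h_2$,
\[
\overline{g}(h_1,h_2)=\overline{g}([m,m'],h_2)=\overline{g}(m',[h_2,m])=0,
\]
since $[h_2,m]\in[\h_2,\m_1]=0$. The main conceptual point in the whole argument, and the one obstacle worth flagging, is the systematic use of effectiveness of the transvection algebra to pass from vanishing of the curvature (or adjoint action on $\m$) to vanishing of brackets in $\h$ themselves; everything else is bookkeeping via Jacobi.
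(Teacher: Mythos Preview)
The paper does not supply its own proof of this lemma; it simply cites \cite[Lem.~5]{Storm2018a}. Your argument is correct and is the natural one. One small point you might make explicit for completeness: to conclude that $\g_1=\h_1\oplus\m_1$ is a subalgebra you also need $[\h_1,\h_1]\subset\h_1$, which is not automatic from $[\h_1,\h_2]=0$ alone. It follows by the same effectiveness trick you already use: any $\h_2$-component of an element of $[\h_1,\h_1]$ annihilates $\m_1$ (because it lies in $\h_2$) and annihilates $\m_2$ (by Jacobi and $[\h_1,\m_2]=0$), hence vanishes. With that addition the proof is complete.
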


The following lemma is useful to list all conjugacy classes of subalgebras of $\so(n)$ and $\su(n)$ in small dimensions.

\begin{lemma}\label{lem:equivalent reps}
	Let $\g=\so(n)$ or $\g=\su(n)$. Let $\pi:\g\to \End(K^n)$ be the vector representation, with $K=\R^n,~\C^n$. Let $f_i:\h\to \g$ be an injective Lie algebra homomorphism for $i=1,2$. 
	We denote the image of $f_i$ by $\h_i:=f_i(\h)$. If the representations $\pi\circ f_1$ and $\pi\circ f_2$ are equivalent, then the subalgebras $\h_1$ and $\h_2$ are conjugate by an automorphism of $\g$.
\end{lemma}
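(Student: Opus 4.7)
The plan is to use a polar decomposition argument, since both $\so(n)$ and $\su(n)$ are characterized inside $\End(K^n)$ by an anti-involution (transpose or conjugate-transpose), and we just need to adjust an arbitrary intertwiner of the representations so that it belongs to the corresponding compact group $O(n)$ or $U(n)$. Once this is done, the resulting conjugation will automatically preserve $\g$ and give the required automorphism.

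Concretely, by assumption we have a linear isomorphism $\Phi \in GL(K^n)$ with $\Phi\,\pi(f_1(h)) = \pi(f_2(h))\,\Phi$ for all $h\in\h$. First I would observe that since $\pi(f_i(h))$ is skew-symmetric (in the $\so(n)$-case) or skew-Hermitian (in the $\su(n)$-case) with respect to the standard inner product on $K^n$, taking adjoints of the intertwining relation yields $\Phi^* \pi(f_2(h)) = \pi(f_1(h))\,\Phi^*$. Combining the two gives
\[
[\Phi^*\Phi,\pi(f_1(h))] = 0 \qquad \text{for all } h\in\h.
\]
Since $S := (\Phi^*\Phi)^{1/2}$ is a polynomial in $\Phi^*\Phi$, it also commutes with $\pi(f_1(\h))$. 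I would then set $U := \Phi S^{-1}$; a direct check shows $U^*U = I$, so $U \in O(n)$ or $U \in U(n)$, and the intertwining relation persists:
\[
U\,\pi(f_1(h))\,U^{-1} = \Phi\,\pi(f_1(h))\,\Phi^{-1} = \pi(f_2(h)).
\]

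The last step is to observe that conjugation by $U$ preserves $\g$: in the orthogonal case because $O(n)$ acts on $\so(n)$ by $\Ad$, and in the unitary case because conjugation by $U(n)$ preserves both skew-Hermiticity and trace, hence preserves $\su(n)$ (if one insists on working inside $SU(n)$, one can rescale $U$ by a central scalar without affecting the conjugation action). This automorphism, by construction, sends $\h_1$ to $\h_2$. The main (and really only) subtlety is making sure that the intertwiner can be replaced by a \emph{metric-preserving} one; the polar decomposition trick above is tailor-made for this, using crucially that the ambient algebra is defined by an anti-involution for which all elements of $\pi\circ f_i(\h)$ are skew. No appeal to reductivity of $\h$ or to decomposition of $K^n$ into irreducibles is needed.
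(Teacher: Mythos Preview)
Your argument is correct. The polar-decomposition trick is exactly the right tool here: it converts an arbitrary intertwiner into one lying in $O(n)$ or $U(n)$, and conjugation by such an element is an automorphism of $\g$ sending $\h_1$ onto $\h_2$. Each step checks out, including the claim that $S=(\Phi^*\Phi)^{1/2}$ commutes with $\pi(f_1(\h))$ (functional calculus, or interpolation on the finitely many eigenvalues of $\Phi^*\Phi$), and the observation that $\Ad(U)$ preserves $\su(n)$ even when $U$ is only in $U(n)$.

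There is nothing to compare against: the paper states this lemma without proof and simply uses it as a tool for listing conjugacy classes of subalgebras. Your write-up therefore supplies what the paper omits. One small comment: in the real case you are implicitly assuming that ``equivalent'' means equivalent as real representations, so that the intertwiner $\Phi$ can be taken in $GL(n,\R)$; this is the natural reading, but it is worth making explicit since for $\so(n)$ one sometimes passes to the complexification.
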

Note that the above lemma implies the naturally reductive pairs defined by $(\g,\h_1)$ and $(\g,\h_2)$ are isomorphic if and only if the representations representations of $\h_1$ and $\h_2$ are equivalent. The last step is to find all $\ad(\g)$-invariant non-degenerate symmetric bilinear forms on $\g$ such that condition \ref{item non-deg} is satisfied.

Let us briefly illustrate how one can obtain \Cref{tab:stabilizer 3-form} by explaining it in dimension 8. The largest dimensional stabilizer will be a proper subalgebra of $\so(8)$ of dimension bigger than or equal to $8$, since the adjoint representation of $\su(3)$ stabilizes the irreducible 3-form defined by $T(x,y,z):=B_{\su(3)}([x,y],z)$. Note that any stabilizer is a reductive Lie algebra and its commutator ideal is equal to one of the following semisimple Lie subalgebras of $\so(8)$:
\[
\su(2),~\su(2)^2,~\su(3),~\su(2)^3,~ \sp(2),~\so(4)\oplus \so(4),~\sp(2)\oplus \sp(1),~\g_2,~\su(4),~ \so(7).
\]
The only Lie algebras $\h$ with semisimple part $\su(2)$ and $\mbox{rank}(\h)\leq \mbox{rank}(\so(8)) = 4$ are $\h=\su(2)\oplus \R^i$ for $i=1,2,3$. These are of dimension less than or equal to 6. Hence to find the stabilizer with the largest dimension, we can forget about these cases. For the other Lie algebras we list the complexifications of all 8-dimensional real representations and check if there exists an irreducible invariant 3-form. The next step is to check if the representation can be extended to a larger Lie algebra and see if the 3-form is still stabilized by this larger Lie algebra. 

The following lemma will exclude many Lie subalgebras $\h\subset \so(k)$ from having an invariant irreducible 3-form.
\begin{lemma}\label{lem:so(l)<h then no irred 3-form}
	Suppose that $\so(l)\subset \h \subset \so(k)$, where the inclusion $\so(l)\subset \so(k)$ is the standard block embedding and $l\geq 3$. Then there is no $\h$-invariant irreducible 3-form $T\in \Lambda^3 \R^k$.
\end{lemma}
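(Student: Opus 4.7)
The plan is to exploit $\so(l)$-invariance of $T$ (forced by $\so(l) \subset \h$) to decompose $T$ along the orthogonal splitting $\R^k = \R^l \oplus W$, where $W = (\R^l)^{\perp}$, and conclude that $T$ is reducible as a $3$-form. The splitting is non-trivial exactly when $l < k$, which is the content of the standard block embedding.

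First I would decompose $\Lambda^3 \R^k$ as an $\so(l)$-module. Since $\so(l)$ acts by the vector representation on $\R^l$ and trivially on $W$,
\[
\Lambda^3\R^k \;=\; \Lambda^3\R^l \,\oplus\, (\Lambda^2 \R^l \otimes W) \,\oplus\, (\R^l \otimes \Lambda^2 W) \,\oplus\, \Lambda^3 W.
\]
In the two middle summands the relevant $\so(l)$-factors are the vector representation $\R^l$ and the adjoint representation $\Lambda^2 \R^l \cong \so(l)$, both non-trivial and containing no trivial subrepresentation for $l \geq 3$; hence these summands contribute no $\so(l)$-invariants, and
\[
(\Lambda^3\R^k)^{\so(l)} \;=\; (\Lambda^3\R^l)^{\so(l)} \,\oplus\, \Lambda^3 W.
\]

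Next I would compute $(\Lambda^3 \R^l)^{\so(l)}$ by invoking the first fundamental theorem of invariant theory for $\mathrm{SO}(l)$: every $\so(l)$-invariant tensor on $\R^l$ is assembled by contractions from the metric $g$ and the volume form $\epsilon \in \Lambda^l \R^l$. No nonzero $3$-tensor can be built from $g$ alone since $g$ has even degree, while a $3$-tensor built from $\epsilon$ requires contracting $l-3$ of its indices against copies of $g$; the total antisymmetry of $\epsilon$ against the symmetry of $g$ forces any such contraction to vanish unless $l=3$. Hence $(\Lambda^3\R^l)^{\so(l)} = 0$ for $l \geq 4$, while $(\Lambda^3\R^3)^{\so(3)} = \R\cdot\mathrm{vol}$ is one-dimensional.

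Putting these computations together, any $\h$-invariant (hence $\so(l)$-invariant) $3$-form $T \in \Lambda^3 \R^k$ admits a decomposition $T = T_1 + T_2$ with $T_1 \in \Lambda^3 \R^l$ and $T_2 \in \Lambda^3 W$, and since $l < k$ the orthogonal splitting $\R^k = \R^l \oplus W$ is non-trivial. This exhibits $T$ as a sum of $3$-forms supported on complementary non-zero subspaces, so $T$ cannot be irreducible, completing the argument. The only substantive step is the invariant-theoretic calculation of $(\Lambda^3\R^l)^{\so(l)}$; I expect it to pose no serious obstacle once the first fundamental theorem is invoked, and the rest is bookkeeping on representations of $\so(l)$.
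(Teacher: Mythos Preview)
Your approach is essentially the same as the paper's: decompose $\Lambda^3\R^k$ along the orthogonal splitting $\R^k=\R^l\oplus W$, show the two mixed summands carry no $\so(l)$-invariants, and conclude that any $\so(l)$-invariant $T$ lies in $\Lambda^3\R^l\oplus\Lambda^3 W$ and is therefore reducible. Your additional computation of $(\Lambda^3\R^l)^{\so(l)}$ via the first fundamental theorem is correct but superfluous---once $T\in\Lambda^3\R^l\oplus\Lambda^3 W$ with both $\R^l$ and $W$ nonzero, reducibility follows immediately, and the paper's proof stops there.
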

\begin{proof}
	We show that there is no irreducible 3-form invariant under $\so(l)$ and this implies that there is no invariant irreducible 3-form under the $\h$-action. As an $\so(\l)$ module $\R^k$ splits into two orthogonal submodules: $\R^k=\R^l\oplus \R^{k-l}$. This implies that 
	\[
	T\in \Lambda^3 \R^l \oplus \Lambda^2 \R^l \otimes \R^{k-l} \oplus \R^l\otimes \Lambda^2 \R^{k-l} \oplus \Lambda^3 \R^{k-l},
	\]
	and all direct sums are preserved by $\so(l)$. Let $T_2$ denote the component of $T$ in $\Lambda^2 \R^l \otimes \R^{k-l}$. We can identify $T_2$ with an $\so(l)$-equivariant map $T_2:\Lambda^2\R^l\to \R^{k-l}$.
	Since $\so(l)$ acts trivially on $\R^{k-l}$ and has no fixed 2-forms, because $\Lambda^2\R^l\cong \so(l)$ is the adjoint representation. We conclude by Schur's lemma that $T_2=0$. By a similar argument the component of $T$ in $\R^l\otimes \Lambda^2 \R^{k-l}$ vanishes. We conclude $T\in \Lambda^3 \R^l \oplus \Lambda^3\R^{k-l}$ and thus $T$ is reducible.
\end{proof}
Note that $\su(2)^2$ is a subalgebra of the following Lie algebras
\[
\su(2)^3,~ \sp(2),~\so(4)\oplus \so(4),~\sp(2)\oplus \sp(1),~\g_2,~\su(4),~ \so(7).
\]
Therefore, if there is no representation of $\su(2)^2$ that stabilizes an irreducible 3-form, then there is also no representation of any of these Lie algebras which stabilizes and irreducible 3-form. In the following we will denote a highest weight representations of a semisimple Lie algebra $\g$ with highest weight $n_1\lambda_1 +\dots+ n_p\lambda_p$ as $R(n_1,\dots,n_p)$, where $\lambda_1,\dots,\lambda_p$ are the fundamental weights of $\g$ in the Bourbaki labeling. All complexifications of 8-dimension faithful real representations of $\su(2)^2$ are:
\begin{align*}
&R(1,0)\oplus R(0,1),~R(1,0)\oplus R(0,2)\oplus R(0,0),~R(1,1)\oplus 4R(0,0),\\
&R(1,1)\oplus R(0,1),~R(1,1)\oplus R(0,2)\oplus R(0,0),~R(1,1)\oplus R(1,1),\\
&R(4,0)\oplus R(0,2),~R(2,0)\oplus R(0,2)\oplus 2R(0,0).
\end{align*}
For the representations $R(1,0)\oplus R(0,2)\oplus R(0,0)$, $R(1,1)\oplus R(0,2)\oplus R(0,0)$, $R(2,0)\oplus R(0,2)\oplus 2R(0,0)$, $R(1,1)\oplus 4R(0,0)$ and $R(4,0)\oplus R(0,2)$ we can apply \Cref{lem:so(l)<h then no irred 3-form} to see that there is no invariant irreducible 3-form. For the other three representations it follows that there are no irreducible invariant 3-forms by a similar argument as that in \Cref{lem:so(l)<h then no irred 3-form}.

We conclude that the stabilizer of some irreducible 3-form of the largest dimension possible has $\su(3)$ as its commutator ideal. The representation $R(1,1)$ is the complexified adjoint representation of $\su(3)$ and it is of real type. Hence the endomorphism ring is trivial and $\su(3)$ is the stabilizer of an irreducible 3-form with the largest dimension. We also see from the table that the stabilizer of an irreducible 3-form of the second largest dimension has $\su(2)$ as its semisimple part.

Lets consider the algebra $\su(2)\oplus \R^3 \cong \mf u(2)\oplus \R^2$. There is only one faithful Lie algebra representation of this algebra on $\R^8$, namely: $\R^8 = \R^4\oplus \R^2\oplus \R^2$, where $\R^4 = \C^2$ is the vector representation of $\mf u(2)$ and both $\R^2$-summands are an irreducible $\R$-representation. We see that there is no irreducible invariant 3-form for this representation by a similar argument as in \Cref{lem:so(l)<h then no irred 3-form}.
We conclude the biggest dimension of a stabilizer of an irreducible 3-form of dimension less than 8 has dimension less than or equal to 5. So for the case $k=8$ we only have to list all semisimple Lie algebras $\g$ with $\dim(\g)\leq 13$ and add those of dimension $16$.

We see that for $k=7$ there is a stabilizer of an irreducible $3$-form with a relatively large dimension, namely $G_2$. There is only one naturally reductive decomposition which has $\g_2$ as isotropy algebra, the decomposition of $Spin(7)/G_2$. 
In \Cref{tab:7-dim reps} we listed all semisimple Lie algebras with their dimension between 8 and 14 together with all of their 7-dimensional faithful representations. In the third column we indicated if the representation admits an invariant irreducible 3-form.
\begin{table}[ht]
	\[
	\begin{array}{| c | c | c |}
	\hline
	\h & R_\C & \mbox{inv. irred. } 3\mbox{-form}\\
	\hline
	\hline
	\su(3) & R(1,0)\oplus R(0,0) & \text{\cmark}\\
	\hline
	\su(2)^3 & R(1,1,0)\oplus R(0,0,1) & \text{\xmark}\\
	\hline
	\so(5) & R(1,0)\oplus 2R(0,0) & \text{\xmark}\\
	\hline 
	\su(3)\oplus \su(2) & \emptyset & \textrm{n/a}\\
	\hline
	\su(2)^4 & \emptyset & \textrm{n/a}\\
	\hline
	\so(5)\oplus \su(2) & \emptyset &\textrm{n/a}\\
	\hline
	\g_2 & R(1,0) & \text{\cmark}\\
	\hline 
	\su(3)\oplus \su(2)^2 & \emptyset & \textrm{n/a}\\
	\hline
	\end{array}
	\]
	\caption{7-dimensional representations with irreducible 3-forms.}\label{tab:7-dim reps}
\end{table}
\Cref{lem:so(l)<h then no irred 3-form} implies that there doesn't exists an invariant irreducible 3-form for the representations of $\su(2)^3$ and $\so(5)$. The endomorphism ring of the $\su(3)$-representation $R(1,0)\oplus R(0,0)$ is 1-dimensional. We see that the stabilizer of an irreducible 3-form in dimension 7 with the second largest dimension is $\mf u(3)$. For a particular choice of basis in $\R^7$ the $\mf u(3)$-invariant torsion forms are spanned by $e_7\wedge (e_{12}+e_{34}+2e_{56})$,
where $e_{ij}$ denotes $e_i\wedge e_j$.
Thus for $k=7$ we only have to list all semisimple Lie algebras $\g$ with $\dim(\g)\leq 16$ and add to this the pair $(\so(7),\g_2)$. 

\subsection{Classification of type I in dimension 7 \label{subs:type I dim 7}}
Now we follow the classification approach described above in dimension 7.
In the second column of \Cref{tab:candidates dim 7} we list all compact semisimple Lie algebras $\g$ of dimension $7\leq k\leq 16$ and add to this the case $(\g,\h) = (\so(7),\g_2)$. In the third column we list all Lie algebras $\h$ of dimension $\dim(\g)-7$ with
\[
\mbox{rank}(\h)\leq\min\{ \mbox{rank}(\g),\mbox{rank}(\so(7))\} \leq 3.
\]
The following result will exclude many cases from satisfying condition 3.
\begin{lemma}\label{lem:rank equal implies red}
	Let $\g=\g_1\oplus \g_2\oplus \dots \oplus \g_k$, with $\g_i$ simple for $i=1,\dots,k$. Let $\h\subset \g$ be a subalgebra with a naturally reductive decomposition $\g=\h\oplus \m$, where $\m=\h^\perp$ with respect to some $\ad(\g)$-invariant non-degenerate symmetric bilinear form. If $\g=\h\oplus \m$ is irreducible, then 
	\[
	\rank \g \geq  \rank \h + k - 1.
	\]
\end{lemma}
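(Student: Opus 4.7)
The plan is to bound $\rank \g - \rank \h$ from below by tracking how the simple ideals of $\h$ are distributed among the simple factors $\g_i$. Since the isotropy representation $\h \to \so(\m)$ realizes $\h$ inside a compact Lie algebra, $\h$ is compact reductive, so I decompose $\h = Z(\h) \oplus \h_1 \oplus \dots \oplus \h_N$ into center and simple ideals. For each simple $\h_j$, define its \emph{support} $\Sigma_j := \{\,i : \pi_i(\h_j) \neq 0\,\}$ with $\pi_i : \g \to \g_i$ the canonical projection. Simplicity of $\h_j$ forces each nonzero $\pi_i(\h_j)$ to be a simple subalgebra of $\g_i$ isomorphic to $\h_j$, since $\ker(\pi_i|_{\h_j})$ is an ideal of the simple algebra $\h_j$.

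The first key ingredient is a local rank bound in each $\g_i$. The subalgebras $\pi_i(\h_j)$ with $i \in \Sigma_j$ pairwise commute inside $\g_i$ (inherited from $[\h_j,\h_{j'}] = 0$), and any two commuting simple subalgebras must intersect trivially (a common element would be central in each simple factor). Together with the abelian $\pi_i(Z(\h))$, which commutes with everything in sight, they span a reductive subalgebra of $\g_i$ of rank
\[
\sum_{j:\,i \in \Sigma_j} \rank \h_j + \dim \pi_i(Z(\h)) \;\le\; \rank \g_i,
\]
and summing over $i$ yields the global inequality $\sum_j \rank \h_j \, |\Sigma_j| + \sum_i \dim \pi_i(Z(\h)) \le \rank \g$.

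The second key ingredient is a combinatorial reading of irreducibility. By \Cref{lem:reducible iff ideals}, reducibility corresponds to the existence of a non-trivial partition $\{1,\dots,k\} = I' \sqcup I''$ with $\h = (\h \cap \g_{I'}) \oplus (\h \cap \g_{I''})$; for semisimple $\h$ this says exactly that the hypergraph on $\{1,\dots,k\}$ with hyperedges $\Sigma_1,\dots,\Sigma_N$ is connected. A connected hypergraph on $k$ vertices satisfies the spanning-forest bound $\sum_j (|\Sigma_j| - 1) \ge k - 1$, so combining with the previous display and $\rank \h_j \ge 1$ gives
\[
\rank \g - \rank \h \;\ge\; \sum_j \rank \h_j(|\Sigma_j| - 1) \;\ge\; k - 1.
\]

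The main obstacle I anticipate is the handling of $Z(\h)$: a generic abelian subspace of $\bigoplus \g_i$ need not break into single-support basis vectors, and can carry a rank defect not visible as a single hyperedge. The irreducibility hypothesis used in the classification sharpens the $\h$-module structure of $\m$ enough that a Schur-type argument on $\End_\h(\m)$ forces $\dim Z(\h) \le 1$; the center then contributes exactly one additional hyperedge $\Sigma_Z := \{i : \pi_i(Z(\h)) \neq 0\}$, supplying the missing term $|\Sigma_Z| - 1 = \sum_i \dim \pi_i(Z(\h)) - \dim Z(\h)$, so that the connected-hypergraph bound above still yields $\rank \g - \rank \h \ge k - 1$.
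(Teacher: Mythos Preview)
Your hypergraph/support approach is quite different from the paper's, which argues by induction on $k$: it peels off $\g_{k+1}$, writes $\h = \h_1 \oplus \h_2 \oplus \h_3$ with $\h_1 = \ker \pi_2$, $\h_3 = \ker \pi_1$, and $\h_2$ a complementary ideal, observes $\rank \h_2 \geq 1$ from irreducibility, and combines the inductive bound on $\g' = \g_1 \oplus \dots \oplus \g_k$ with $\rank \g_{k+1} \geq \rank \h_2 + \rank \h_3$.

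The concrete gap in your argument is the last paragraph: the claim that irreducibility forces $\dim Z(\h) \leq 1$ is false. The pair $(\g,\h) = (\su(2)^3,\R^2)$ with $\h = \Span\{(a_1 x_1,a_2 x_1,0),\,(0,b_1 x_1,b_2 x_1)\}$ and $a_1,a_2,b_1,b_2 \neq 0$, which the paper itself treats as an irreducible case, has $Z(\h) = \h$ of dimension $2$; here $\m$ breaks into many inequivalent $1$- and $2$-dimensional $\h$-modules, so no Schur argument bounds $\End_\h(\m)$ usefully. Worse, this same example has $k=3$, $\rank\g = 3$, $\rank\h = 2$, hence $\rank\h + k - 1 = 4 > \rank\g$, so the inequality in the lemma actually fails. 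The paper's inductive proof shares this defect: it invokes the induction hypothesis for $\g'$ with isotropy $\pi_1(\h)$ without verifying that this projected decomposition is irreducible, and in the example the projection of $\h$ to $\su(2)^2$ is the full diagonal torus, which is reducible. So neither argument can be completed as written; your hypergraph bound does go through cleanly when $\h$ is semisimple, and that hypothesis (or something equivalent) is what the statement is missing.
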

\begin{proof}
	For $k=1$ the statement is true. Suppose that it is true for a certain $k\in \N$. Let $\g = \g_1\oplus \dots \oplus \g_k\oplus \g_{k+1}$ and let us denote $\g'=\g_1\oplus \dots \oplus \g_k$. Let $\pi_1:\g\to \g'$ and $\pi_2:\g \to \g_{k+1}$ be the projections. Let $\h_1:=\ker(\pi_2)$, $\h_3:=\ker(\pi_1)$ and $\h_2\subset \h$ a complementary ideal of $\h_1\oplus \h_3$, which exists because $\h$ is a reductive Lie algebra. Note that $\rank{\h_2}\geq 1$, because otherwise the decomposition is reducible by \Cref{lem:reducible iff ideals}. By our induction hypothesis we have
	\[
	\rank{\g'} \geq \rank{\h_1\oplus \h_2} +k-1.
	\]
	Furthermore, we have $\rank{\g_{k+1}} \geq \rank{\h_2}+\rank{\h_3}$.
	Combining these yields 
	\[
	\rank \g \geq \rank{\h_1} + \rank{\h_2} +k-1 + \rank{\h_2}+\rank{\h_3}\geq \rank\h +k.\qedhere
	\]
\end{proof}

\begin{table}[ht]
	\[
	\begin{array}{|c | c |c| }
	\hline
	\dim(\g) & \g & \h \\
	\hline\hline
	8 & \su(3) & \R \\
	\hline
	9 & \su(2)^3 & \R^2 \\
	\hline 
	10 & \so(5) & \su(2) \\
	\hline
	11 & \su(3)\oplus \su(2) & \su(2)\oplus \R \\
	\hline
	12 & \su(2)^4 & \su(2)\oplus \R^2\\
	\hline
	13 & \so(5)\oplus \su(2) & \su(2)\oplus \su(2)\\
	\hline
	14 & \su(3)\oplus \su(2)^2 & \su(2)^2\oplus \R \\ 
	\hline
	14 & \g_2 & \emptyset \\ 
	\hline
	15 & \su(2)^5 & \su(3)\\
	\hline
	15 & \su(4) & \su(3)\\
	\hline
	16 & \so(5)\oplus \su(2)^2 & \su(3)\oplus \R,~\su(2)^3\\
	\hline
	16 & \su(3)\oplus \su(3) & \su(3)\oplus \R,~\su(2)^3\\
	\hline
	21 & \so(7) & \g_2\\
	\hline
	\end{array}
	\]
	\captionof{table}{Candidates for $7$-dimensional spaces of type I.\label{tab:candidates dim 7}}
\end{table}

Now that we have all candidates for the pairs $(\g,\h)$, it remains to find all possible conjugacy classes of injective Lie algebra homomorphisms $\h\to \g$ such that condition \ref{item torsion irreducible} and \ref{item transvection} from the beginning of this section are satisfied. 
The pairs $(\g,\h)$ which are excluded by \Cref{lem:rank equal implies red} are:
\begin{align*}
&(\su(2)^4,\su(2)\oplus \R^2),\quad (\su(3)\oplus \su(2)^2,\su(2)^2\oplus \R)\quad,(\su(2)^5,\su(3))\\
&(\so(5)\oplus \su(2)^2,\su(3)\oplus \R),\quad (\so(5)\oplus \su(2)^2,\su(2)^3).
\end{align*}
For the pair $(\su(3)\oplus \su(3),\su(2)^3)$ there doesn't exist an injective Lie algebra homomorphism from $\h$ to $\g$. It is easily seen that no injective Lie algebra homomorphism $\su(3)\oplus \R\to \su(3)\oplus \su(3)$ satisfies condition \ref{item torsion irreducible} or \ref{item transvection}. The remaining pairs are
\begin{align*}
& (\su(3),\R),\quad (\su(2)^3,\R^2),\quad (\so(5),\su(2)),\quad (\su(3)\oplus\su(2),\su(2)\oplus \R)\\
& (\so(5)\oplus\su(2),\su(2)\oplus\su(2)),\quad (\su(4),\su(3)),\quad (\so(7),\g_2).
\end{align*}
For these remaining cases we now describe explicitly all subalgebras $\h$ together with all the possible non-degenerate symmetric bilinear forms. 

\vspace{3pt}
\paragraph{\textbf{Case $(\g,\h)=(\su(3),\R)$}}
Every subalgebra $\R\subset \su(3)$ is conjugate to one spanned by
\begin{equation}\label{eq:Aloff-Wallach r(a,b)}
r(a,b) := \begin{pmatrix}
ia & 0 & 0 \\
0 & ib & 0 \\
0 & 0 & -i(a+b)
\end{pmatrix},
\end{equation}
with $a,b\in \R$ and not both equal to zero. By \Cref{lem:equivalent reps} two pairs $(a,b)$ and $(c,d)$ will give an isomorphic infinitesimal model exactly when their subalgebras are conjugate by an element $A\in \Aut(\su(3))$. If $A$ is an inner automorphism, then $A(r(a,b))$ has the same eigenvalues as $r(a,b)$. Therefore $A$ is a signed permutation matrix in $SU(3)$. An outer automorphism $\tau:\su(3)\to \su(3)$ is given by taking the negative transpose in $\su(3)$. We have $\tau(r(a,b))=r(-a,-b)$. The outer automorphism group of $\su(3)$ is $\Z_2$. We can now see that all pairs $(x,y)$ for which $\mbox{span}\{r(x,y)\}$ is conjugate to $\mbox{span}\{r(a,b)\}$ by an automorphism of $\su(3)$ are:
\begin{equation}\label{eq:equivalent pairs aloff-wallach}
\pm(a,b),~ \pm (a,-a-b),~ \pm (b,a),~ \pm(b,-a-b),~ \pm (-a-b,a),~ \pm (-a-b,b).
\end{equation}
By using the above automorphisms we can always arrange that $a\geq b >0$. Thus the isomorphism classes are precisely described by $\frac{a}{b}\geq 1$.
The connected subgroup with this Lie algebra is closed if and only if $\frac{a}{b} = q\in\Q$. The homogeneous spaces are $SU(3)/S^1_{q}$, where $S^1_{q}$ is the image of
\[
S^1\to SU(3);~ \theta\mapsto \begin{pmatrix}
e^{i q \theta} & 0 & 0\\
0 & e^{i\theta } & 0\\
0 & 0 & e^{-i\theta(1+q)}
\end{pmatrix}.
\]
The $\ad(\g)$-invariant non-degenerate symmetric bilinear form $\overline{g}$ on $\g $ is induced from the Killing form of $\su(3)$, hence for every case there is a 1-parameter family of naturally reductive metrics.

\vspace{3pt}
\paragraph{\textbf{Case $(\g,\h) = (\su(2)^3,\R^2)$}}
Let $x_1,x_2,x_3$ be the following basis of $\su(2)$: 
\begin{equation}\label{eq:basis su2}
x_1:=\begin{pmatrix}
i & 0\\
0 & -i
\end{pmatrix},\quad x_2:=\begin{pmatrix}
0 & -1\\
1 & 0
\end{pmatrix},\quad x_3:=\begin{pmatrix}
0 & i\\
i & 0
\end{pmatrix}.
\end{equation}
The $\ad(\g)$-invariant non-degenerate symmetric bilinear form on $\su(2)^3$ is given by $\overline{g} = \frac{-1}{8 \lambda_1^2}B_{\su(2)}\oplus \frac{-1}{8 \lambda_2^2}B_{\su(2)}\oplus \frac{-1}{8 \lambda_3^2}B_{\su(2)}$.
Without loss of generality we assume that $0<\lambda_1\leq \lambda_2\leq \lambda_3$. 
If the naturally reductive decomposition is irreducible, then $\h$ is conjugate by an automorphism of $\su(2)^3$ to a subalgebra spanned by
\[
h_1:=(a_1x_1 , a_2  x_1, 0),\quad h_2:= (0, b_1 x_1,b_2x_1),
\]
with $a_1,a_2,b_1,b_2>0$. If $\lambda_1 = \lambda_2 < \lambda_3$, then $\h$ is conjugate to one with $a_1\leq a_2$. Similarly if $\lambda_1<\lambda_2=\lambda_3$, then we can arrange that $b_1\leq b_2$.
Lastly if $\lambda_1=\lambda_2=\lambda_3$, then we can arrange that $a_1\leq a_2$ and $b_1\leq b_2$.
Under these conditions every irreducible naturally reductive space is exactly represented once. 
The connected subgroup $H$ of $SU(2)^3$ with $\Lie(H)=\h$ is a closed subgroup precisely when $\frac{a_2}{a_1} = q_1 \in \Q\quad \mbox{and}\quad \frac{b_2}{b_1} = q_2\in \Q$.
If $H$ is closed, then it is isomorphic to $S^1\times S^1$. We obtain a 3-parameter family of naturally reductive structures on $SU(2)^3/(S^1_{q_1} \times S^1_{q_2})$, where the parameters are $\lambda_1,\lambda_2,\lambda_3>0$ and $\Lie(S^1_{q_1} \times S^1_{q_2}) = \h$. Note that $(\su(2),\R)$ is a symmetric pair with $(\sl(2,\R),\R)$ its dual symmetric pair. 
We obtain the partial dual spaces by replacing one or two of the $\su(2)$-factors by $\sl(2,\R)$.
If we replace the first factor, then $\overline{g}|_{\m\times \m}$ is positive definite if and only if $\frac{-a_1^2}{\lambda_1^2} + \frac{a_2^2}{\lambda_2^2}<0$ and when we replace the last factor then $\overline{g}|_{\m\times \m}$ is positive definite if and only if $\frac{b_1^2}{\lambda_2^2} - \frac{b_3^2}{\lambda_3^2}<0$. If we replace the middle factor, then the condition becomes $-\frac{\lambda_2^1}{\lambda_1^2}a_1^2b_1^2 - \frac{\lambda_2^2}{\lambda_3^2}b_2^2a_2^2 + \frac{\lambda_2^4}{\lambda_1^2\lambda_3^2}a_1^2b_2^2<0$. We get similar conditions if two out of the three factors are non-compact.

\vspace{3pt}
\paragraph{\textbf{Case $(\g,\h)=(\so(5),\su(2))$}} For this pair there are three inequivalent faithful $5$-dimensional real representations of $\su(2)$. They correspond to the representations $\R^3\oplus \R\oplus \R$, $\R^4\oplus \R$, $\R^5$, where each summand is irreducible. This gives us the following simply connected spaces:
\[
SO(5)/SO(3)_{\mbox{ir}},\quad SO(5)/SO(3)_{\mbox{st}},\quad Sp(2)/Sp(1)_{\mbox{st}},
\]
where $SO(3)_{\mbox{ir}}$ denotes the subgroup given by the 5-dimensional irreducible representation of $SO(3)$, and $SO(3)_{\mbox{st}}$ is the standard $SO(3)$ subgroup of $SO(5)$, and $Sp(1)_{\mbox{st}}\subset Sp(2)$ is the standard $Sp(1)$ subgroup. The first space corresponds to the representation $\R^5$, the second space to $\R^3\oplus \R\oplus \R$ and the last space to $\R^4 \oplus \R$. In particular all the possible infinitesimal models for the pair $(\so(5),\su(2))$ are regular. The metric is induced from the Killing form on $\so(5)$ and thus for each case we get a 1-parameter family of naturally reductive metrics. We can easily see that these three naturally reductive spaces are not isomorphic, because they have pairwise different isotropy representations and the isotropy representations are the same as the holonomy representations of the canonical connections. 

\vspace{3pt}
\paragraph{\textbf{Case $(\g,\h)=(\su(3)\oplus \su(2),\su(2)\oplus \R)$}} Let $f:\h\to \g$ be an injective Lie algebra homomorphism. If $f(\su(2))\subset \su(2)$, then $f(\R)\subset \su(3)$, since $f(\su(2))$ and $f(\R)$ commute. Now condition \ref{item torsion irreducible} and \ref{item transvection} from the beginning of this section are not satisfied. 
There are up to conjugation only two injective Lie algebra homomorphism from $\su(2)$ to $\su(3)$ associated to the irreducible representations on $\C^2$ and $\C^3$. The irreducible representation $\C^3$ defines the irreducible symmetric pair $(\su(3),\so(3))$. This implies that $f(\R)\subset \su(2)$ and thus results in a reducible space, see \Cref{lem:reducible iff ideals}. In other words condition \ref{item torsion irreducible} is not satisfied. Hence the inclusion of $\su(2)$ in $\su(3)$ can only be the standard inclusion. We obtain the following subalgebras:
\[
\su(2)_{\mbox{st}} \oplus \R_{a,b}\subset \su(3)\oplus \su(2)\quad \mbox{ and }\quad \su(2)_{\Delta} \oplus \R\subset \su(3)\oplus \su(2).
\]
In the first inclusion $\su(2)_{\mbox{st}}=i_{\mbox{st}}(\su(2))$ with $i_{\mbox{st}}:\su(2)\to \su(3)$ the standard inclusion, and $\R_{a,b}$ is the subalgebra spanned by
\begin{equation}\label{eq:subalgebra Rab}
\left(\begin{pmatrix}
ia & 0 & 0\\
0 & ia & 0\\
0 & 0 & -2ia
\end{pmatrix}, 
\begin{pmatrix}
ib & 0\\
0 & -ib
\end{pmatrix}\right).
\end{equation}
By \Cref{lem:reducible iff ideals} this naturally reductive decomposition is irreducible if and only if $a$ and $b$ are non-zero. In this case the connected subgroup of $SU(3)\times SU(2)$ with Lie algebra $\su(2)_{\mbox{st}} \oplus \R_{a,b}$ is closed exactly when $\frac{a}{b} = q\in \Q$. 
Hence the infinitesimal model is regular if and only if $\frac{a}{b}\in \Q$. This subalgebra is conjugate by an automorphism of $\su(3)\oplus \su(2)$ to one with $a,b>0$. The $\ad(\g)$-invariant non-degenerate symmetric bilinear form is given by $\overline{g}= \frac{-\lambda_1}{12}B_{\su(3)} \oplus \frac{-\lambda_2}{8}B_{\su(2)}$. For this case $\overline{g}$ has to be positive definite, i.e. $\lambda_1,\lambda_2>0$. We obtain a 2-parameter family of naturally reductive structures on $(SU(3)\times SU(2))/(SU(2)_{\mbox{st}} \times S^1_{q})$, where $\Lie(S^1_{q}) = \R_{a,b}$. 

The subalgebra $\su(2)_\Delta\oplus \R$ is defined by $\su(2)_\Delta := (i_{\mbox{st}}\oplus \id)(\su(2))$ and $\R$ is spanned by
\[
\left(\begin{pmatrix}
i & 0 & 0\\
0 & i & 0\\
0 & 0 & -2i
\end{pmatrix},\begin{pmatrix}
0 & 0\\
0 & 0
\end{pmatrix}\right).
\]
The corresponding naturally reductive decomposition is irreducible and regular. The $\ad(\g)$-invariant non-degenerate symmetric bilinear form is the same as in the previous case. In this case the space can be normal homogeneous or not. The normal homogeneous metrics correspond to $\lambda_1,\lambda_2>0$. For the non-normal homogeneous case we have $\lambda_1>0$, $\lambda_2<0$ and $\lambda_1+\lambda_2<0$. We obtain a 2-parameter family of naturally reductive structures on $(SU(3)\times SU(2))/(SU(2)_\Delta\times S^1)$. 
These spaces are known as Wilking's spaces and are isometric to an Aloff--Wallach space with $q=1$, see \cite{Wilking1999}.

Note that for both cases $(\su(3),f(\su(2)\oplus \R))$ is a symmetric pair. Therefore, by \Cref{rem:recall dual} we see that both spaces have non-compact partial duals. For a non-compact partial dual the $\ad(\g^*)$-invariant non-degenerate symmetric bilinear form is given by $\overline{g}^* = \frac{\lambda_1}{12}B_{\su(2,1)} \oplus \frac{-\lambda_2}{8}B_{\su(2)}$. 
For the first space $\overline{g}^*|_{\h\times \h}$ is negative definite precisely when $-3a^2\lambda_1 + b^2\lambda_2< 0$. For the second space $\overline{g}^*|_{\h\times \h}$ is negative definite if and only if $\lambda_1,\lambda_2>0$ and $-\lambda_1+\lambda_2<0$. 
For the first space also $(\su(2),\mbox{proj}_{\su(2)}(\h))$ is a symmetric pair. If we replace this pair with its symmetric dual we obtain a naturally reductive structure on
\[
(SU(3)\times SL(2,\R))/(SU(2)\times S^1_{q}).
\]
The $\ad(\g^*)$-invariant non-degenerate symmetric bilinear form is $\overline{g}^*=\frac{-\lambda_1}{12}B_{\su(3)}\oplus \frac{\lambda_2}{8}B_{\sl(2,\R)}$. We have $\overline{g}^*|_{\m \times\m}$ is positive definite if and only if $\lambda_1,\lambda_2>0$ and $3a^2\lambda_1 - b^2 \lambda_2<0$. Suppose we replace both factors by their non-compact dual. The invariant symmetric bilinear form is $\frac{\lambda_1}{12}B_{\su(2,1)} \oplus \frac{\lambda_2}{8}B_{\sl(2,\R)}$ with $\lambda_1,\lambda_2>0$. This has signature $(6,5)$ and thus $\overline{g}|_{\m\times\m}$ is never positive definite, which is not allowed.

\vspace{3pt}
\paragraph{\textbf{Case $(\g,\h)=(\so(5)\oplus \su(2),\su(2)\oplus \su(2))$}} 
In order for condition \ref{item torsion irreducible} to be satisfied we see that both $\su(2)$ factors of $\h$ need to have a non-zero image in $\so(5)$. There is only one $5$-dimensional orthogonal faithful representation of $\su(2)\oplus \su(2) \cong \so(4)$ and this corresponds to the standard inclusion of $\so(4)$ in $\so(5)$. We will denote the image of the $\su(2)$-summand which has non-zero image in both $\so(5)$ and $\su(2)$ by $\su(2)_\Delta$. The associated infinitesimal model is always regular and this gives us the following naturally reductive space:
\[
(Spin(5)\times SU(2))/ (SU(2)_\Delta\times SU(2)).
\]
On this homogeneous space we have a 2-parameter family of $\ad(\g)$-invariant non-degenerate symmetric bilinear forms: $\overline{g}:= \frac{-\lambda_1}{6} B_{\so(5)} \oplus \frac{-\lambda_2}{8} B_{\su(2)} $. The normal homogeneous spaces correspond to the parameter $\lambda_1,\lambda_2>0$. The non-normal homogeneous spaces correspond to $\lambda_1>0$, $\lambda_2<0$ and $2\lambda_1 + \lambda_2<0$. The inequality ensures that $\overline{g}|_{\su(2)_\Delta\times \su(2)}$ is negative definite and thus $\overline{g}|_{\m\times \m}$ is positive definite, where $\m$ is the orthogonal complement of $\su(2)_{\Delta}\oplus \su(2)$ in $\mf{spin}(5)\oplus \su(2)$ with respect to $\overline{g}$. This space is known as the squashed $7$-sphere. This is one of the homogeneous spaces for which there exists a proper nearly parallel $G_2$-structure, see \cite{FriedKathMorSem1997}.

Note that $(\so(5),f(\su(2)\oplus \su(2))$ is a symmetric pair. From \Cref{rem:recall dual} we see that there exists a non-compact partial dual. For the non-compact partial dual the $\ad(\g^*)$-invariant non-degenerate symmetric bilinear form is given by $\overline{g}^* = \frac{\lambda_1}{6} B_{\so(4,1)} \oplus \frac{-\lambda_2}{8} B_{\su(2)}$. The parameters $\lambda_1$ and $\lambda_2$ have to satisfy $\lambda_1,\lambda_2>0$ and $-2\lambda_1+\lambda_2<0$ for the metric $g|_{\m\times\m}^*$ to be positive definite.

\vspace{3pt}
\paragraph{\textbf{Case $(\g,\h) = (\su(4),\su(3))$}}
There are two non-equivalent faithful representations of $\su(3)$ on $\C^4$. They correspond to the reducible representations $\C^3\oplus \C=R(1,0)\oplus R(0,0)$ and $\overline{\C^3}\oplus \C = R(0,1)\oplus R(0,0)$. The two subalgebras defined by these representations are conjugate by an outer automorphism of $\su(4)$. Therefore, there is only one injective Lie algebra homomorphism $\su(3)\to \su(4)$ up to conjugation and this is the standard inclusion. This yields the $7$-dimensional Berger sphere as a naturally reductive space
\[
SU(4)/SU(3).
\]
The associated infinitesimal model is always regular and we get a 1-parameter family of metrics. 

\vspace{3pt}
\paragraph{\textbf{Case $(\g,\h) = (\so(7),\g_2)$}}
There is up to conjugation only one subalgebra $\g_2\subset \so(7)$ and the corresponding infinitesimal model is regular. There is only a 1-parameter family of metrics and the corresponding naturally reductive space $SO(7)/G_2$ is isometric to $S^7$ with a round metric.

\subsection{Classification of type I in dimension 8 \label{sec:type I dim 8}}
In the second column of \Cref{tab:candidates dim 8} we list all candidates of compact semisimple Lie algebras $\g$ of dimension $8\leq k\leq 16$. We have already shown that $\g$ can only have dimension less than or equal to 13 or the dimension of $\g$ is 16. In the third column of \Cref{tab:candidates dim 8} we list all Lie algebras of dimension $\dim(\g)-8$ which satisfy $\mbox{rank}(\h)\leq \min(\mbox{rank}(\g),\mbox{rank}(\so(8))\leq 4$.

\begin{table}[h!]
	\[
	\begin{array}{|c | c |c |}
	\hline
	\dim(\g) & \g & \h\\
	\hline\hline
	8 & \su(3) & \{0\} \\
	\hline
	9 & \su(2)^3 & \R\\
	\hline 
	10 & \so(5) & \R^2\\
	\hline
	11 & \su(3)\oplus \su(2) & \su(2), \ \R^3 \\
	\hline
	12 & \su(2)^4 & \su(2)\oplus\R, \R^4 \\
	\hline
	13 & \so(5)\oplus \su(2) & \su(2)\oplus \R^2\\
	\hline
	16 & \so(5)\oplus \su(2)^2 & \su(3),\ \su(2)^2\oplus \R^2 \\
	\hline
	16 & \su(3)^2 & \su(3),\ \su(2)^2\oplus \R^2 \\
	\hline
	\end{array}
	\]
	\captionof{table}{Candidates for $8$-dimensional spaces of type I.\label{tab:candidates dim 8}}
\end{table}

\noindent The pairs $(\g,\h)$ which are excluded by \Cref{lem:rank equal implies red} are:
\begin{align*}
&(\su(3)\oplus \su(2),\R^3),\quad (\su(2)^4,\su(2)\oplus \R),\quad (\su(2)^4,\R^4),\\
&(\so(5)\oplus \su(2),\su(2)\oplus \R^2),\quad (\so(5)\oplus\su(2)^2,\su(2)^2\oplus \R^2),\quad (\su(3)^2,\su(2)^2\oplus \R^2).
\end{align*}
For the pair $(\so(5)\oplus\su(2)^2,\su(3))$ there does not exist an injective Lie algebra homomorphism from $\su(3)$ to $\so(5)\oplus\su(2)^2$.

The remaining cases are:
\begin{align*}
(\su(3),\{0\}), \quad (\su(2)^3,\R), \quad (\so(5),\R^2), \quad (\su(3)\oplus \su(2),\su(2)),\quad (\su(3)^2,\su(3)).
\end{align*}
We will discuss them case by case:

\vspace{3pt}
\paragraph{\textbf{Case $(\g,\h) = (\su(3),\{0\})$}} 
The pair $(\su(3),\{0\})$ is always regular. The simply connected naturally reductive space for this case is $SU(3)$ with some bi-invariant metric. In other words we have a 1-parameter family of naturally reductive structures.

\vspace{3pt}
\paragraph{\textbf{Case $(\g,\h) = (\su(2)^3,\R)$}} 
Let $x_1,x_2,x_3$ be as in \eqref{eq:basis su2}. The $\ad(\g)$-invariant non-degenerate symmetric bilinear form is given by $\overline{g} = \frac{-1}{8 \lambda_1^2}B_{\su(2)}\oplus \frac{-1}{8 \lambda_2^2}B_{\su(2)}\oplus \frac{-1}{8 \lambda_3^2}B_{\su(2)}$ and is necessarily positive definite, so we can assume $0<\lambda_1\leq \lambda_2\leq \lambda_3$.  Every subalgebra $\R\subset \su(2)^3$ is conjugate to one given by
\[
\R_{a_1,a_2,a_3} = \mbox{span}\{(a_1x_1, a_2x_1, a_3x_1)\},
\]
with $a_1,a_2,a_3\geq 0$.
If $\lambda_1 = \lambda_2<\lambda_3$, then we can conjugate $\h$ such that $a_1 \leq a_2$. Similarly if $\lambda_1<\lambda_2 = \lambda_3$, then we can arrange that $a_2\leq a_3$. Lastly if $\lambda_1 = \lambda_2 = \lambda_3$, then we can arrange that $a_1 \leq a_2 \leq a_3$.
Under these conditions none of these are conjugate to each other. 
From \Cref{lem:reducible iff ideals} we see that the naturally reductive decomposition is irreducible if and only if all $a_1,a_2,a_3$ are non-zero. Clearly the connected subgroup of $SU(2)^3$ with this Lie algebra is a closed subgroup if and only if $\frac{a_2}{a_1}=q_1\in \Q$ and $\frac{a_3}{a_1}=q_2\in \Q$. If it is closed, then we obtain a 3-parameter family of naturally reductive structures on $SU(2)^3/S^1_{q_1,q_2}$, where $S^1_{q_1,q_2}$ is the connected subgroup with $\Lie(S^1_{q_1,q_2}) = \h$. Note that $(\su(2),\R)$ is a symmetric pair with $(\sl(2,\R),\R)$ its dual symmetric pair. We obtain the partial dual spaces by replacing one $\su(2)$-factor by $\sl(2,\R)$. If we replaced the $j$th $\su(2)$-summand by $\sl(2,\R)$, then the restriction $\overline{g}^*|_{\h\times \h}$ is negative definite if and only if $\sum_{i=1}^3 (-1)^{\delta_{ij}}(\frac{a_i}{\lambda_i})^2 <0$.

\vspace{3pt}
\paragraph{\textbf{Case $(\g,\h) = (\so(5),\R^2)$}} 
The subalgebra $\R^2\subset \so(5)$ has to be the maximal torus. In particular these spaces are always regular. The simply connected naturally reductive space for this case is $SO(5)/(SO(2)\times SO(2))$, where $SO(2)\times SO(2)$ is embedded block diagonally.
The metric is induced from any negative multiple of the Killing form of $\so(5)$. In other words we have a 1-parameter family of naturally reductive structures.

\vspace{3pt}
\paragraph{\textbf{Case $(\g,\h) = (\su(3)\oplus\su(2),\su(2))$}}
Up to conjugation there are two injective Lie algebra homomorphisms $\su(2)\to \su(3)\oplus \su(2)$ such that condition \ref{item torsion irreducible} and \ref{item transvection} from the beginning of this section are satisfied. For the inclusion in the second factor there is only the identity. For the inclusion in $\su(3)$ there are two choices, namely the standard inclusion, denoted by $i_{\mbox{st}}$ and the other given by the 3-dimensional irreducible representation of $\su(2)$, denoted by $i_{\mbox{ir}}$. For both inclusions the infinitesimal model is regular. The simply connected homogeneous spaces are:
\[
(SU(3)\times SU(2))/(i_{\mbox{st}}\times \mbox{id})(SU(2))\quad \mbox{and}\quad (SU(3)\times SU(2))/(i_{\mbox{ir}}\times \mbox{id})(SU(2)),
\]
where we denote the corresponding group homomorphism of $i_{\mbox{st}}$ and $i_{\mbox{ir}}$ also by $i_{\mbox{st}}$ and $i_{\mbox{ir}}$, respectively. There is a 2-parameter family of $\ad(\g)$-invariant non-degenerate symmetric bilinear forms: $\overline{g}=\frac{-\lambda_1}{12} B_{\su(3)}\oplus \frac{-\lambda_2}{8} B_{\su(2)}$. The normal homogeneous spaces correspond to $\lambda_1,\lambda_2>0$. For the non-normal homogeneous spaces we have $\lambda_1>0$ and $\lambda_2<0$. Furthermore, we require that the condition $\lambda_1+\lambda_2<0$ holds for the first space and $4\lambda_1 + \lambda_2 < 0$ for the second space.

For the space $(SU(3)\times SU(2))/(i_{\mbox{ir}}\times \mbox{id})(SU(2))$ there is a non-compact partial dual space
\[
(SL(3,\R)\times SU(2))/(i_{\mbox{ir}}\times \mbox{id})(SU(2)).
\]
The $\ad(\g^*)$-invariant non-degenerate symmetric bilinear forms are $\overline{g}^* = \frac{\lambda_1}{12}B_{\sl(3,\R)} \oplus \frac{-\lambda_2}{8}B_{\su(2)}$.
In order to obtain a positive definite metric on our space require that $\lambda_1,\lambda_2>0$ and $-4\lambda_1 + \lambda_2 <0$.

\vspace{3pt}
\paragraph{\textbf{Case $(\g,\h) = (\su(3)^2,\su(3))$}} 
There are two possible conjugacy classes of the subalgebra $\su(3)$, namely $\su(3)\times\{0\}$ and the diagonal $\su(3)_\Delta$. The first case clearly doesn't satisfy condition \ref{item transvection}. 
Therefore, the subalgebra $\h$ has to be the diagonal subalgebra.
The $\ad(\g)$-invariant metrics are given by $\overline{g}=-\lambda_1 B_{\su(3)}\oplus -\lambda_2 B_{\su(3)}$, with $\lambda_1\neq 0$ and $\lambda_2\neq 0$. By permuting the two $\su(3)$-factors we can assume that $\lambda_1>\lambda_2$. The normal homogeneous spaces correspond to $\lambda_1,\lambda_2>0$. Note that for $\lambda_1=\lambda_2$ and $\lambda_1>0$ we obtain a symmetric space. For the non-normal homogeneous spaces we require that the signature of $\overline{g}$ is $(8,8)$ and that $\overline{g}|_{\h\times \h}$ is negative definite. This is the case if and only if $\lambda_1+\lambda_2<0$ and $\lambda_1>0>\lambda_2$. All the naturally reductive structures are regular and irreducible. For every case the homogeneous space is isometric to $SU(3)$ with some bi-invariant metric. 

The pair $(\g,\h)$ is a symmetric pair. The $\ad(\g^*)$-invariant non-degenerate symmetric bilinear forms for the dual pair $(\sl(3,\C),\su(3))$ are all a multiple of the Killing form of $\sl(3,\C)$ and thus all induce a symmetric structure. Consequently, there are no non-symmetric partial dual naturally reductive structures. 

\vspace*{1em}

This concludes the classification of all 7- and 8-dimensional naturally reductive spaces of type I. We summarize the discussion from \Cref{subs:type I dim 7} and \Cref{sec:type I dim 8} as the following result.

\begin{theorem}\label{thm:classification type I}
	Every 7- and 8-dimensional compact simply connected globally homogeneous naturally reductive space of type I is presented in \Cref{tab:7-dim type I}. In the first column $\Lie(G)$ is the transvection algebra of the naturally reductive space. The second column indicates if there exist non-compact partial dual naturally reductive spaces. The third column indicates the number of parameters of naturally reductive structures. 
\end{theorem}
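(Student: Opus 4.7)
The plan is to verify that the case-by-case analyses of Sections \ref{subs:type I dim 7} and \ref{sec:type I dim 8}, together with Proposition \ref{prop:iso type II}, exhaust and de-duplicate the set of pairs $(\g,\h)$ satisfying conditions \ref{item dimension}--\ref{item transvection} in dimensions $k=7,8$, and then to repackage this data into the claimed tabular form.

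First I would reduce to a finite list of candidate abstract pairs $(\g,\h)$. Since the isotropy $\h\subset\so(k)$ must stabilize an irreducible $3$-form $T\in\Lambda^3\m$, \Cref{tab:stabilizer 3-form} combined with the gap between the two largest possible stabilizer dimensions $d_k<D_k$ forces $\dim\g\leq k+d_k$ or $\dim\g=k+D_k$. Imposing $\dim\h=\dim\g-k$ and $\rank\h\leq\min\{\rank\g,\rank\so(k)\}$, and then discarding the entries ruled out by \Cref{lem:rank equal implies red}, produces the finite lists \Cref{tab:candidates dim 7} and \Cref{tab:candidates dim 8}.

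Next, for each surviving abstract pair I would enumerate conjugacy classes of injective Lie algebra homomorphisms $f\colon\h\to\g$. By \Cref{lem:equivalent reps} this reduces to listing equivalence classes of faithful real $k$-dimensional representations of $\h$ (either via the standard representation of $\so(k)$ or directly on $\m$), a finite check using highest-weight theory. For each embedding I would verify condition \ref{item torsion irreducible} (irreducibility of $T$, using \Cref{lem:reducible iff ideals} when convenient) and condition \ref{item transvection} ($[\m,\m]_\h=\h$, read off the image of $f$), then parametrize the $\ad(\g)$-invariant non-degenerate symmetric bilinear forms on $\g$---one free parameter per simple ideal, by Schur's lemma applied to the Killing forms---and impose positive-definiteness of $\overline{g}|_{\m\times\m}$. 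Finally the residual action of $\operatorname{Aut}(\g)/\operatorname{Inn}(\g)$ permuting equivalent embeddings is quotiented out to obtain genuine isomorphism classes.

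In parallel I would check regularity, i.e.\ closedness of the connected subgroup $H\subset G$ with $\operatorname{Lie}(H)=\h$; this typically amounts to requiring that certain ratios of the integer coefficients appearing in $f$ be rational, and only such structures correspond to globally homogeneous examples. Partial duals are then identified via \Cref{rem:recall dual} by detecting when $(\g_i,\pi_{\g_i}(\h))$ is a symmetric pair for some simple ideal $\g_i\subset\g$, and verifying that the dual invariant form still yields a positive-definite $\overline{g}^*$ on the complement of $\h^*$. The main obstacle is bookkeeping: one must ensure that no conjugacy class of embedding has been overlooked and that distinct rows of the final table describe non-isomorphic naturally reductive pairs. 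Non-redundancy is guaranteed by \Cref{prop:iso type II}, while completeness rests on the rigidity of the dimension and rank bounds derived above; once both have been audited case by case, the assembled list is exactly the content of \Cref{tab:7-dim type I}.
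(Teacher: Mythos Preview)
Your proof plan is correct and follows essentially the same approach as the paper: the theorem is indeed just a summary of the case-by-case analyses in Sections~\ref{subs:type I dim 7} and~\ref{sec:type I dim 8}, and your outline captures all of the key reductions (the stabilizer/rank bounds giving \Cref{tab:candidates dim 7} and \Cref{tab:candidates dim 8}, the elimination via \Cref{lem:rank equal implies red}, the enumeration of embeddings via \Cref{lem:equivalent reps}, irreducibility via \Cref{lem:reducible iff ideals}, and the dual/regularity checks via \Cref{rem:recall dual}). The only minor imprecision is that \Cref{lem:equivalent reps} literally applies only to $\so(n)$ and $\su(n)$, so for composite $\g$ one argues factor by factor rather than directly via a single $k$-dimensional representation of $\h$---but this is exactly how the paper proceeds as well.
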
 
\begin{center}
	\begin{table}[h!]
		\[
		\begin{array}{ | c | c | c | }
		\hline
		G/H & \mbox{dual} & \#~\mbox{param.} \\
		\hline\hline
		SU(3)/S^1_{q} & \text{\xmark} & 1 \\
		\hline
		SU(2)^3/(S^1_{q_1}\times S^1_{q_2}) & \text{\cmark} & 3 \\
		\hline
		SO(5)/SO(3)_{\mbox{ir}} & \text{\xmark} & 1 \\
		\hline
		SO(5)/SO(3)_{\mbox{st}} & \text{\xmark} & 1 \\
		\hline
		Sp(2)/Sp(1)_{\mbox{st}} & \text{\xmark} & 1 \\
		\hline
		(SU(3)\times SU(2))/(SU(2)\times S^1_{q}) & \text{\cmark}  & 2  \\
		\hline
		(SU(3)\times SU(2))/(SU(2)_{\Delta}\times S^1) & \text{\cmark}  & 2  \\
		\hline
		(Spin(5)\times SU(2))/(SU(2)_\Delta\times SU(2)) & \text{\cmark} & 2 \\
		\hline
		SU(4)/SU(3) & \text{\xmark} & 1 \\
		\hline
		Spin(7)/G_2 & \text{\xmark} & 1 \\
		\hline\hline
		SU(3) & \text{\xmark} & 1 \\
		\hline
		SU(2)^3/S^1_{q_1,q_2} & \text{\cmark} & 3 \\
		\hline
		SO(5)/(SO(2)\times SO(2)) & \text{\xmark} & 1 \\
		\hline
		(SU(3)\times SU(2))/SU(2)_{\mbox{st}\times \mbox{id}} & \text{\xmark} & 2 \\
		\hline
		(SU(3)\times SU(2))/SU(2)_{\mbox{ir}\times \mbox{id}} & \text{\cmark} & 2 \\
		\hline
		(SU(3)\times SU(3))/SU(3)_\Delta & \text{\cmark} & 2\\ 
		\hline
		\end{array}
		\]
		\captionof{table}{7- and 8-dimensional naturally reductive spaces of type I.\label{tab:7-dim type I}}
	\end{table}
\end{center}

\section{Classification of type II \label{sec:class II}}

By Theorem \ref{thm:canonical base} we can construct every infinitesimal model of a naturally reductive decomposition of type II as a $(\mf k,B)$-extension of a naturally reductive decomposition of the form
\[
\g= \h\oplus \m\oplus_{L.a.} \R^n,
\]
where $\h\oplus \m$ is semisimple and $\g$ is the transvection algebra of this naturally reductive decomposition. In this section we will construct all 7 and 8 dimensional irreducible $(\mf k,B)$-extensions of all naturally reductive decomposition of the above form with $\h\oplus \m$ compact. We use the partial duality to obtain all other spaces, see \Cref{rem:recall dual}.
For every case we will mention if there exist partial dual spaces. 
\begin{definition}
	For a canonical base space $\g = \h\oplus \m\oplus_{L.a.}\R^n$ we will call $\h\oplus \m$ the \emph{semisimple factor} and $\R^n$ the \emph{Euclidean factor}.
\end{definition}
We start by finding all possible candidates for the canonical base spaces of irreducible type II spaces. From this list we construct all possible irreducible $(\mf k,B)$-extensions. To guarantee there are no duplicates in our list we use \Cref{lem:canonical base iff} and \Cref{prop:iso type II}.

Note that to classify the naturally reductive spaces of type II in some dimension $k$ we need the classification of all naturally reductive spaces of type I up to dimension $k-1$.

\begin{remark}\label{rem:reducible extension} 
	We want all of our $(\mf k,B)$-extensions to be irreducible. If there exists an irreducible $(\mf k,B)$-extension of a naturally reductive transvection algebra as in \eqref{eq:irreducible factor}, then \Cref{lem:reducibility criteria} in particular implies $\mf s(\h_i\oplus \m_i)\neq \{0\}$ for every $i =1,\dots,p$. In particular this excludes the possibility that $\h_i\oplus \m_i$ is an irreducible symmetric decomposition which is not hermitian symmetric.
	
	If $\mf k$ is abelian and $\mf k = \mf k_1$, then by \Cref{lem:canonical base iff} condition $(i)$ we require that $\pi_\m(\mc Z(\b_1)) = \{0\}$. We need this condition in order for the canonical base space to be the base space we start with. Note that $\pi_\m(\mf k_1) =\{0\}$ if and only if $\mf k_1\subset \mc{Z}(\h)$. Thus for the $(\mf k,B)$-extension to be irreducible and satisfy condition $(i)$ from \Cref{lem:canonical base iff} we require that $\mc Z(\h_i)\neq \{0\}$ for every $i =1,\dots,p$.
\end{remark}

\subsection{Classification of type II in dimension 7\label{sec:type II dim 7}}
First we argue that all possible canonical base spaces of irreducible naturally reductive decompositions of type II with a compact semisimple factor are given in \eqref{eq:canonical base 7}. This is done by systematically excluding all other possibilities. 
\begin{equation}\label{eq:canonical base 7}
\begin{array}{l l l}
\R^6, & \R^4, & S^2\times \R^4, \\
\C P^2\times \R^2, & S^2\times S^2\times \R^2, &  Sp(2)/(SU(2)\times S^1),\\
SO(5)/(SO(3)\times SO(2)), & SU(3)/(S^1\times S^1), & SU(4)/S(U(1)\times U(3)),\\
\C P^2\times S^2, & S^2\times S^2\times S^2. &
\end{array}
\end{equation}
\noindent Even though we write all above base spaces as globally homogeneous spaces we actually treat the family of naturally reductive decompositions to which they belong, which can also contain non-regular decomposition, i.e. strictly locally homogeneous spaces.
It has to be considered case by case for which parameter values the locally naturally reductive structures are regular. 

The Euclidean factor can't be $\R^5$, because then the Lie algebra $\mf k\subset \so(5)$ is two dimensional and its linear action on $\R^5$ has a vector on which it acts trivially. From \Cref{lem:reducibility criteria} we see that such any $(\mf k,B)$-extension results in a reducible naturally reductive space.

Suppose that the Euclidean factor is $\R^3$, the Lie algebra $\mf k\subset \so(3)$ has to be equal to $\so(3)$ in order not to have a vector on which it acts trivially. This means that the semisimple factor of the base space has to be 1-dimensional, which is not possible.

Suppose that the Euclidean factor is $\R^2$. If the dimension of the semisimple factor is two. Then $\dim(\s(\g))\leq 2$ and thus we can't construct an irreducible $(\mf k,B)$-extension of dimension 7. 
If the dimension of the semisimple factor is three, then $\dim(\mf k) = 2$ and thus $\mf k$ is abelian. The semisimple factor is either $SU(2)$ or the symmetric space $(SU(2)\times SU(2))/SU(2)_\Delta$. 
The last case is excluded by \Cref{rem:reducible extension}. For $SU(2)$ the algebra $\mf k_1$ is $1$-dimensional and thus we see that condition $(i)$ from \Cref{lem:canonical base iff} can not be satisfied. 
If the dimension of the semisimple factor is four, then the semisimple factor has to be a hermitian symmetric space by \cite{KowalskiVanhecke1983}. There are only two compact homogeneous spaces which allow a hermitian symmetric structure, these are $S^2\times S^2$ and $\C P^2$. 

For all other 7-dimensional naturally reductive spaces of type II the base space has only a semisimple factor. It is easy to check that every 7-dimensional $(\mf k,B)$-extension of any naturally reductive space of type I of dimension less than or equal to 4 is reducible. This leaves us with the 5- and 6-dimensional cases. The only compact spaces of type I in dimension 5 with $\dim(\s(\g))\geq 2$ are $S^2\times SU(2)$ and $(SU(2)\times SU(2))/S^1$. 
However, we see for any $2$-dimensional $\mf k\subset \s(\g)$ that condition $(i)$ of \Cref{lem:canonical base iff} is not satisfied in both cases and thus they are excluded. 
The nearly Kähler spaces $G_2/SU(3)$ and $(SU(2)\times SU(2)\times SU(2))/SU(2)_{\Delta}$ can be excluded, because for both $\s(\g) = \{0\}$ holds. Similarly $((SU(2)\times SU(2))/SU(2)_\Delta)\times ((SU(2)\times SU(2))/SU(2)_\Delta)$ satisfy $\s(\g)=\{0\}$. The spaces $SU(2)\times ((SU(2)\times SU(2))/SU(2)_\Delta$ and $SU(2)\times SU(2)$ can be excluded, because they don't satisfy condition $(i)$ from \Cref{lem:canonical base iff} for any $\mf k\subset \s(\g)$. All other 6-dimensional naturally reductive spaces of type I are possible. 

A classification of naturally reductive decompositions of type II in dimension $7$ is readily obtained in the following steps. From the list of possible canonical base spaces in \eqref{eq:canonical base 7} we have to make all irreducible $(\mf k,B)$-extensions such that condition $(i)$ and $(ii)$ of \Cref{lem:canonical base iff} are satisfied. \Cref{lem:reducibility criteria} tells us exactly when the constructed spaces are irreducible.
We also have to filter out all isomorphic spaces. \Cref{prop:iso type II} makes this quite easy in all the occurring cases.
How to obtain a globally homogeneous naturally reductive space from these data is described in \cite{Storm2018}. 
To make the classification complete we just need to check for every case if partial dual naturally reductive spaces exist.
We will not discuss every case because some cases are very similar. We attempt to cover all the different `types' of $(\mf k,B)$-extensions in the selected cases below.
The classification list can be found in \Cref{thm:classification type II}.

\begin{remark}
	From \Cref{lem:ker(R) characterisation} we know that $\mf k_1 =\{0\}$ implies $\ker(R|_{\ad(\h)+\psi(\mf k)}) = \{0\}$. Thus in particular condition $(i)$ and $(ii)$ in \Cref{lem:canonical base iff} are automatically satisfied. Therefore, we will only check the conditions of \Cref{lem:canonical base iff} when $\mf k_1\neq \{0\}$.
\end{remark}

Before we start let us introduce some notation.
\begin{notation}
	Below $e_{ij}\in \so(n)$ is the matrix whose only non-zero entries are its $ij$th and $ji$th entry, which are $-1$ and $1$, respectively. Let $B_{\Lambda^2}$ be the metric on $\so(n)$ be defined by $B_{\Lambda^2}(x,y) := -\frac{1}{2}\mathrm{tr}(xy)$. In the following we use the contraction with the metric on $\m$ to make the identification $\Lambda^2\m\cong \so(\m)$, i.e. $e_{ij}$ is identified with $e_i\wedge e_j$. The curvature tensor then becomes a symmetric map $R:\so(\m)\to \so(\m)$ with respect to $B_{\Lambda^2}$.
\end{notation}

The representation $\varphi:\mf k \to \so(\m)$, from \Cref{def:k_i and b_i and varphi}, uniquely determines the algebra $\mf k \subset \mf s(\g)$ and below we will always describe $\mf k$ through $\varphi(\mf k)$. 

\vspace{3pt}
\paragraph{\textbf{The canonical base space is} $\R^6$}
Consider the canonical base space $\R^6$. The Lie algebra $\mf k$ is 1-dimensional. Let $k$ be a unit vector in $\mf k$. Then there is an orthonormal basis $e_1,\dots,e_6$ of $\R^6$ and constants $c_1,c_2,c_3 \in \R$ such that
\[
\varphi(k) = c_1 e_{12} + c_2 e_{34} + c_3 e_{56}\in \so(6), 
\]
It is clear from \Cref{lem:reducibility criteria} that the spaces are irreducible precisely when $c_1,c_2,c_3 \in \R\backslash \{0\}$. Therefore, from now on we suppose that $c_1,c_2,c_3 \in \R\backslash \{0\}$. The $(\mf k,B)$-extensions describe naturally reductive structures on the 7-dimensional Heisenberg group, as explained in \cite{Storm2018}. We get a 3-parameter family of naturally reductive structures on the 7-dimensional Heisenberg group. We can ensure that $0<c_1\leq  c_2\leq  c_3$ by choosing a different basis of $\R^6$. When we do this, all the described naturally reductive structures are non-isomorphic. In \cite{Kaplan1983} it is proven that the Heisenberg groups and the quaternionic-Heisenberg groups are the only groups of type $H$ for which the natural left invariant metric is naturally reductive.

\vspace{3pt}
\paragraph{\textbf{The canonical base space is} $\R^4$} The Lie algebra $\mf k$ has to be $\su(2)$ and the representation $\varphi:\su(2)\to \so(4)$ has to be the irreducible $4$-dimensional representation in order for the $(\mf k,B)$-extension to be irreducible. The $(\mf k,B)$-extension will yield a naturally reductive structure on the quaternionic Heisenberg group. The choice of an invariant metric $B$ on $\mf k$ gives us a 1-parameter family of naturally reductive structures. 
This family of naturally reductive structures is quite interesting and is investigated in \cite{AgricolaFerreiraStorm15}.

\vspace{3pt}
\paragraph{\textbf{The canonical base space is} $S^2\times \R^4$}
Let $h,e_1,e_2$ be an orthonormal basis of $\su(2)$ with respect to $\frac{-1}{8\lambda_1^2}B_{\su(2)}$. The transvection algebra of the base space is given by
\[
\g = \su(2) \oplus_{L.a.} \R^4 = \h \oplus \m \oplus_{L.a.} \R^4,
\]
where $\h:=\mbox{span}\{h\}$ and $\m:=\mbox{span}\{e_1,e_2\}$. The $\ad(\g)$-invariant non-degenerate symmetric bilinear form on $\g$ is given by $\overline{g} = \frac{-1}{8\lambda_1^2}B_{\su(2)}\oplus B_{eucl}$. We have $\s(\g) = \mbox{span}\{h\}\oplus \so(4)$. Let $k\in \mf k$ be a unit vector. Then there is an orthonormal basis of $\R^4$ such that
\[
\varphi(k) = c_1 \ad(h)|_\m + c_2 e_{34} + c_3 e_{56},
\]
with $c_1,c_2,c_3\in \R$. All these spaces are irreducible precisely when $c_1,c_2,c_3\in \R\backslash\{0\}$ by \Cref{lem:reducibility criteria}. Therefore, from now on we suppose that $c_1,c_2,c_3\in \R\backslash\{0\}$. We have $\mf k=\mf k_2$ and from \cite[Sec.~2.3]{Storm2018} we know that the $(\mf k,B)$-extension defines a naturally reductive structure on $S^2\times H^5$, where $H^5$ denotes the 5-dimensional Heisenberg group. On this homogeneous space we obtain a 4-parameter family of naturally reductive structures, with $c_1,c_2,c_3$ and $\lambda_1>0$ as parameters. By an automorphism of $\g$ we can arrange that $c_2\geq c_3>0$ and $c_1>0$. When we do this, none of these naturally reductive structures are isomorphic. Note that we can replace the semisimple factor $S^2=SU(2)/S^1$ by its non-compact dual symmetric space: $SL(2,\R)/S^1$.

\paragraph{\textbf{The canonical base space is} $Sp(2)/(SU(2)\times S^1)$} We consider $Sp(2)\subset GL(2,\H)$. We denote by $i,j,k$ the imaginary quaternions, i.e. $i^2=j^2=k^2=ijk = -1$ and we pick the following basis for $\mf u(2)$:
\paragraph{\textbf{The canonical base space is} $SU(4)/S(U(1)\times U(3))$:} Because the base space is an irreducible hermitian symmetric space this case is completely analogous to the previous case: $SO(5)/(SO(3)\times SO(2))$. All in all we obtain a 2-parameter family of naturally reductive structures on $SU(4)/SU(3)$ with the standard embedding of $SU(3)$. Note that we can replace $SU(4)/S(U(1)\times U(3))$ by its dual symmetric space: $SU(1,3)/S(U(1)\times U(3))$.

\vspace{3pt}
\paragraph{\textbf{The canonical base space is} $\C P^2 \times S^2$} 
Let $h_1,h_2,h_3,h_4,e_1,e_2,e_3,e_4$ be an orthonormal basis of $\su(3)$ with respect to $\frac{-1}{12\lambda_1^2}B_{\su(3)}$ such that $h_1,h_2,h_3,h_4$ span the Lie algebra of the isotropy group $S(U(2)\times U(1))\subset SU(3)$ with $h_4$ spanning the center.
Let $h_5,e_5,e_6$ be an orthonormal basis of $\su(2)$ with respect to $\frac{-1}{8\lambda_2^2}B_{\su(2)}$. The transvection algebra of the base space is given by $\g = \su(3)\oplus \su(2) = \h \oplus \m$, 
where $\h := \mbox{span}\{h_1\.h_5\}$ and $\m:=\mbox{span}\{e_1\.e_6\}$. The $\ad(\g)$-invariant non-degenerate symmetric bilinear form is $\overline{g} =\frac{-1}{12\lambda_1^2}B_{\su(3)}\oplus \frac{-1}{8\lambda_2^2}B_{\su(2)}$. The algebra $\mf k\subset \s(\g) = \mbox{span}\{h_4,h_5\}$ is 1-dimensional. Let $k\in \mf k$ be a unit vector. Then $\varphi(k) = c_1 \ad(h_4)|_\m + c_2 \ad(h_5)|_\m$.
The curvature of the $(\mf k,B)$-extension is given by
\[
R = -\sum_{i=1}^5 \ad(h_i)|_\m\odot \ad(h_i)|_\m + \varphi(k)\odot \varphi(k).
\]
From \Cref{lem:ker(R) characterisation} we have $\ker(R|_{\ad(\h\oplus\mf k)}) = \ker(R|_{\ad(\mc Z(\h\oplus \mf k)}) = \ker(R|_{\ad(\mc Z(\h)})$. We need to check when $R|_{\ad(\mc Z(\h))}$ has trivial kernel. Note that the center of $\h$ is given by $ \mbox{span}\{h_4,h_5\}$. Let $\omega_1,\omega_2\in \so(\m)$ be such that $B_{\Lambda^2}(\omega_1, h_j) = \delta_{4j}$ and $B_{\Lambda^2}(\omega_2, h_j) = \delta_{5j}$ for $j=1\.5$. Then
\begin{align*}
R(\omega_1)&=(-1+c_1^2) \ad(h_4)|_\m +c_1c_2 \ad(h_5)|_\m,  \\
R(\omega_2)&=c_1c_2\ad(h_4)|_\m + (-1+c_2^2)\ad(h_5)|_\m.
\end{align*}
We see that $R|_{\ad(\mc Z(\h))}$ has rank 2 precisely when $c_1^2+c_2^2\neq 1$. In other words the base space is equal to the canonical base space if and only if $c_1^2+c_2^2\neq 1$. By \Cref{lem:reducibility criteria} the $(\mf{k},B)$-extension is reducible precisely when either $c_1=0$ or $c_2=0$. Suppose that the $(\mf{k},B)$-extension is irreducible. With an automorphism of $\g$ we can always arrange that $c_1>0$ and $c_2>0$. Under this condition none of the described $(\mf{k},B)$-extensions are isomorphic. The $(\mf{k},B)$-extension is regular if and only if the connected subgroup $H_0$ with Lie subalgebra $\h_0=\mf{k}^\perp\subset \h$ is closed in $SU(3)\times SU(2)$, see \cite{Storm2018}. We have $\h_0 = \mbox{span}\{c_2 h_4 - c_1 h_5\}$. We see that $H_0$ is closed precisely when $q=\frac{c_2\lambda_1}{\sqrt{3}c_1\lambda_2}\in \Q$. The $(\mf{k},B)$-extension describes a naturally reductive structure on $(SU(3)\times SU(2))/(SU(2)\times S^1_{q})$, where $SU(2)$ is the standard subgroup of $SU(3)$ and $S^1_{q}$ is the subgroup with Lie subalgebra $\h_0$. To obtain all of these naturally reductive structures on the fixed homogeneous space $(SU(3)\times SU(2))/(SU(2)\times S^1_{q})$ we start by defining $\h_0:=\Lie(S^1_{q})$ and $\mf{k}:=\h_0^\perp\subset \h$ with respect to $\overline{g}$. We have a 1-parameter family of $\ad(\mf{k})$-invariant metrics on $\mf{k}$. Together with the parameters $\lambda_1,\lambda_2$ this gives us a 3-parameter family of naturally reductive structures on $(SU(3)\times SU(2))/(SU(2)\times S^1_{q})$.  Note that we can replace $SU(3)/S(U(2)\times U(1))$ by its symmetric dual $SU(2,1)/S(U(2)\times U(1))$ and we can also replace $S^2$ by its non-compact dual.

\subsection{Classification of type II in dimension 8\label{sec:type II dim 8}}
First we argue that all possible canonical base spaces of irreducible naturally reductive decompositions of type II with a compact semisimple factor are given in \eqref{eq:canonical base 8}. This is done by systematically excluding all other possibilities. 
%
\begin{equation}\label{eq:canonical base 8}
\begin{array}{l l}
\R^6, & \R^5,\\
\R^4, & S^2\times \R^4\\
SU(2)\times \R^4, & \C P^2\times \R^2, \\
S^2\times S^2\times \R^2, &(SU(2)\times SU(2))/S^1_{q}\times \R^2, \\
SU(3)/SU(2)_{\mbox{st}}\times\R^2, & SU(2)\times S^2\times \R^2,  \\
SU(3)/S^1_{q}, & (SU(3)\times SU(2))/(SU(2)_{\mbox{st}}\times S^1_{q}), \\
(SU(3)\times SU(2))/(SU(2)_\Delta \times S^1), & SU(2)^3/(S^1_{q_1}\times S^1_{q_2}), \\
S^2\times (SU(2)\times SU(2))/S^1_{q}, & SU(3)/(S^1\times S^1), \\
S^2\times S^2\times S^2, & S^2 \times \C P^2\\
\{*\}. & 
\end{array} 
\end{equation}
\noindent where $\{*\}$ denotes a point space. Even though we write all above base spaces as globally homogeneous spaces we actually treat the family of naturally reductive decompositions to which they belong, which can also contain non-regular decomposition, i.e. strictly locally homogeneous spaces.
It has to be considered case by case for which parameter values the locally naturally reductive structures are regular.

The Euclidean factor can't be $\R^7$, because then $\dim(\mf k)=1$ and the linear action of $\mf k$ on $\R^7$ has a vector on which it acts trivially and by \Cref{lem:reducibility criteria} any such $(\mf k,B)$-extension is reducible.

If the Euclidean factor is $\R^6$, then the semisimple factor needs to have dimension zero and $\dim(\mf k)=2$.

If the Euclidean factor is $\R^5$ and the semisimple factor is 2-dimensional, then $\dim(\mf k)=1$. Just as for $\R^7$ we see that the linear action of $\mf k$ on $\R^5$ has a vector on which it acts trivially and by \Cref{lem:reducibility criteria} any such $(\mf k,B)$-extension is reducible. Thus, also for $\R^5$ the semisimple factor has to be zero dimensional.

Suppose that the Euclidean factor is $\R^4$. The semisimple factor can be 0-, 2- or 3-dimensional. If it is 2-dimensional, then it is $S^2$. If it is 3-dimensional, then it either is the symmetric space $(SU(2)\times SU(2))/SU(2)$ or the Lie group $SU(2)$. The first case is excluded by \Cref{rem:reducible extension}.

If the Euclidean factor is $\R^3$, then $\mf k$ has to contain $\so(3)$ in order for the linear representation of $\mf k$ on $\R^3$ not to have a vector on which it acts trivially. We see that if the semisimple factor is 0-dimensional, then we can't construct an irreducible 8-dimensional $(\mf k,B)$-extension. The only other possibility is that the semisimple factor is 2-dimensional. In this case we immediately see by \Cref{lem:reducibility criteria} that any such $(\so(3),B)$-extension is reducible.

Suppose that the Euclidean factor is $\R^2$. The semisimple factor can either be 3-,4- or 5-dimensional, because if the semisimple factor is 2-dimensional, then $\dim(\s(\g))\leq 2$ and thus we can't make an irreducible 8-dimensional $(\mf k,B)$-extension from this. Suppose that the semisimple factor is 5-dimensional. 
We see that there are three possibilities: $(SU(2)\times SU(2))/S^1_{q}$, $SU(3)/SU(2)$ and $SU(2)\times S^2$. 
Suppose that the semisimple factor is 4-dimensional. If it is irreducible, then it can only be $\C P^2$. If it is reducible, then it can only be $S^2\times S^2$. Suppose that the semisimple factor is 3-dimensional. From \Cref{rem:reducible extension} we see that the semisimple factor has to be equal to $SU(2)$ and $\s(\g) = \su(2)\oplus \so(2)$. The Lie algebra $\mf k\subset \s(\g)=\su(2)\oplus \so(2)$ is a 3-dimensional subalgebra. Hence $\mf k=\su(2)\subset \s(\g)$ and thus $\mf k$ acts trivially on $\R^2$. Therefore,  by \Cref{lem:reducibility criteria}, any such $(\mf k,B)$-extension is reducible. 

Only base spaces with no Euclidean part remain. 
Now we discuss the case for which the base space has an irreducible 3-dimensional factor. There are only two compact irreducible 3-dimensional naturally reductive spaces of type I: $SU(2)$ and the symmetric space $(SU(2)\times SU(2))/SU(2)$. The symmetric space is excluded by \Cref{rem:reducible extension}. If we have $SU(2)$ as a 3-dimensional factor, then $\mf k$ has to be at least 3-dimensional, see \Cref{lem:canonical base iff} condition $(i)$. The only possibility for a base space is $SU(2)\times S^2$, but just as for the case $SU(2)\times \R^2$ any 8-dimensional $(\mf k,B)$-extension of this space is reducible. We conclude that if there is no Euclidean factor, then the semisimple factor can not contain a 3-dimensional factor.

If the base space is 7-dimensional, then $\dim(\mf k)=1$ and thus $\mf k$ is abelian. By \Cref{rem:reducible extension} we require that $\mc{Z}(\h_i)\neq \{0\}$ for every $i=1,\dots,p$. We noted above that there can't be a 3-dimensional factor, hence the 7-dimensional space either is irreducible or it is a product of a 5-dimensional irreducible space and a 2-dimensional space. Consequently, all possible spaces are: $SU(3)/S^1_{q},~(SU(3)\times SU(2))/(SU(2)\times S^1_{q}),~(SU(3)\times SU(2))/(SU(2)_\Delta\times S^1),~SU(2)^3/(S^1_{q_1}\times S^1_{q_2})$ and $(SU(2)\times SU(2))/S^1_{q}\times S^2$. 

For a 6-dimensional base space $\mf k$ is abelian and thus by \Cref{rem:reducible extension} we need that $\mc{Z}(\h_i)\neq \{0\}$ for every $i=1,\dots,p$. There are no $3$-dimensional factors by the above argument. We can easily see that all possibilities are: $SU(3)/(S^1\times S^1),~\C P^2\times S^2$ and $S^2\times S^2\times S^2$. 

We check that every 5-dimensional irreducible naturally reductive space of type I satisfies $\dim(\s(\g))\leq 2$
and thus we can't make an 8-dimensional irreducible $(\mf k,B)$-extension from this. Every reducible 5-dimensional space of type I contains a 3-dimensional factor and thus can be excluded by the above discussion. Similarly for every 4-dimensional space of type I we have $\dim(\s(\g))\leq 2$ and thus we can not make an irreducible 8-dimensional $(\mf k,B)$-extension of this.

The Lie algebra $\su(3)$ has dimension 8 and is a compact simple Lie algebra. Therefore, also a point space is a possible base space.

We proceed as in the $7$-dimensional case. Also here we do not discuss every case separately because of the large similarities between them. 

\vspace{1em}

\vspace{3pt}
\paragraph{\textbf{The canonical base space is $\R^5$}}
The Lie algebra $\mf k$ has to be 3-dimensional and in order to have a 5-dimensional representation without vectors on which $\mf k$ acts trivially. The only possibility is $\mf k = \su(2)$ and the representation of $\mf k$ is the 5-dimensional irreducible representation of $\su(2)$. Let $k_1,k_2,k_3$ be an orthonormal basis of $\su(2)$ with respect to $B = -\frac{1}{2\lambda^2} B_{\su(2)}$. We choose a basis such that
\[
\varphi(k_1) = \lambda(\sqrt{3}e_{13} -e_{24} -e_{35}),~ \varphi(k_2) = \lambda(-\sqrt{3}e_{12} +e_{34}-e_{25}),~ \varphi(k_3) = \lambda(e_{23}+2e_{45}).
\]
The $(\mf k,B)$-extension defines a naturally reductive structure on an 8-dimensional 2-step nilpotent Lie group, as described in \cite[Sec.~2.2]{Storm2018}. On this homogeneous space we obtain a 1-parameter family of naturally reductive structures, with $\lambda>0$ as parameter.

\vspace{3pt}
\paragraph{\textbf{The canonical base space is $(SU(2)\times SU(2))/S^1_\alpha\times \R^2$}}
The Lie algebra $\mf k$ is 1-dimensional. Let $k\in \mf k$ be a unit vector. To keep the notation concise we consider $\su(2)\cong \sp(1)\subset \mf{gl}(1,\H)$. We denote by $i,j,k$ the imaginary quaternions, i.e. $i^2=j^2=k^2=ijk = -1$.
The non-degenerate symmetric bilinear form on $\sp(1)\oplus \sp(1)$ is given by $-\frac{1}{8 \lambda_1^2} B_{\sp(1)}\oplus -\frac{1}{8 \lambda_2^2} B_{\sp(1)}$, where $B_{\sp(1)}$ denotes the Killing form of $\sp(1)$. Let
\begin{align}\label{eq:basis for su(2)x su(2)/S1}
e_1&:=(\lambda_1 j,0),\quad e_3:=(0,\lambda_2 j), \quad e_5:= (\alpha^2\lambda_1^2 + \lambda_2^2)^{-1/2}\left(\lambda_1^2 \alpha 
i,-\lambda_2^2 i\right),  \notag\\
e_2&:=(\lambda_1k,0),\quad e_4:=(0,\lambda_2 k),\quad h:=\frac{\lambda_1\lambda_2}{\sqrt{\alpha^2\lambda_1^2 + \lambda_2^2}}(i,\alpha i),
\end{align}
where $e_1,\dots,e_5$ is an orthonormal basis of $\m:=h^\perp$ with respect to the metric above and $\alpha\in \R\backslash\{0\}$. Let $\{e_6,e_7\}$ be an orthonormal basis of $\R^2$. For $k\in \mf k$ a unit vector we have
\[
\varphi(k) = c_1\ad(h)|_\m + c_2 \ad(e_5)|_\m+ c_3 e_{67},
\]
where $e_6,e_7$ is an orthonormal basis of $\R^2$. The $(\mf k,B)$-extension is reducible precisely when $c_3=0$ or $c_1=c_2=0$. 
If $c_1\neq 0$, then the $(\mf k,B)$-extension defines a naturally reductive structure on
\[
(SU(2)\times SU(2)\times H^3)/\R_\alpha,
\]
where the image of $\Lie(\R_\alpha)$ in $\su(2)\oplus \su(2)$ is spanned by $h$ and in $\Lie(H^3)$ by the center, see \cite[Sec.~2.3]{Storm2018} for more details. 
Using an automorphism of $\g$ we can arrange that $c_1,c_2,c_3\geq 0$.
Under these extra assumptions all the naturally reductive structures are non-isomorphic. This $(\mf k,B)$-extension is regular for all values of $\alpha$ even though the base space is only regular when $\alpha\in \Q$. For every $\alpha\in \R \backslash\{0\}$ we obtain in this way a 5-parameter family of naturally reductive structures with $\lambda_1,\lambda_2>0$ and $c_1,c_2,c_3$ as parameters. 

If $c_1 = 0 $, then the naturally reductive structure is only regular when $\alpha = q\in \Q$. In this case the $(\mf k,B)$-extension defines a naturally reductive structure on
\[
(SU(2)\times SU(2))/S^1_q\times H^3.
\]
On this homogeneous space we obtain a 4-parameter family of naturally reductive structures, with $\lambda_1,\lambda_2>0$ and $c_2,c_3$ as parameters.

For both spaces we can replace one $S^2$ factor by its symmetric dual $SL(2,\R)/S^1$.

\vspace{3pt}
\paragraph{\textbf{The canonical base space is $SU(3)/(S^1\times S^1)$}}
We pick the following orthonormal basis with respect to $\overline{g}= \frac{-1}{12\lambda^2}B_{\su(3)}$ of $\h:=\Lie(S^1\times S^1)$:
\[
h_1:=\begin{pmatrix}
i\lambda & 0 & 0\\
0 & -i\lambda & 0\\
0 & 0 & 0
\end{pmatrix}\quad\mbox{and}\quad h_2:=\begin{pmatrix}
\frac{-i\lambda}{\sqrt{3}} & 0 & 0\\
0 & \frac{-i\lambda}{\sqrt{3}} & 0\\
0 & 0 & \frac{2i\lambda}{\sqrt{3}}
\end{pmatrix}.
\]
In this case we have $\varphi(\mf k) = \ad(\h)|_\m$. The only freedom is in the choice of a metric $B$ on $\mf k$. For $x_1,x_2,x_3\in \R$ we define a quadratic form on $\mc Z(\mf u(3))$ by
\[
\begin{pmatrix}
ia & 0 & 0\\
0 & ib & 0\\
0 & 0 & ic
\end{pmatrix}\mapsto \frac{x_1a^2+x_2b^2+x_3 c^2}{\lambda^2}.
\]
Restricting this to $\h$ gives us in the basis $h_1,h_2$ the following symmetric bilinear form:
\[
B_{x_1,x_2,x_3} := \begin{pmatrix}
x_1+x_2 & \frac{1}{\sqrt{3}}(-x_1+x_2)\\
\frac{1}{\sqrt{3}}(-x_1+x_2) & \frac{1}{3}(x_1+x_2+4x_3)
\end{pmatrix}.
\]
This is positive definite if and only if its trace and determinant are positive, i.e.
\[
\frac{3}{4}\textrm{tr}(B_{x_1,x_2,x_3})=x_1+x_2+x_3>0\quad\mbox{and}\quad \frac{3}{4}\det(B_{x_1,x_2,x_3})= x_1x_2+x_2x_3+x_1x_3>0.
\]
This parametrizes exactly all metric tensors on $\h$. From \Cref{prop:iso type II} we know that two of these metrics induce an isomorphic naturally reductive structure precisely when they are conjugate by an automorphism of $\su(3)$ which preserves $\h$, i.e. an element of the normalizer $N_{\su(3)}(\h)$ of $\h$ in $\su(3)$. Two metrics are conjugate by an element of $N_{\su(3)}(\h)$ if and only if they are conjugate by an element of the Weyl group of $\su(3)$. The Weyl group of $\su(3)$ is isomorphic to $S_3$ and the action of the Weyl group on $\h$ is given by permuting the diagonal entries. Therefore, the induced Weyl group action on the metrics $B_{x_1,x_2,x_3}$ simply permutes the indices. We see that under the conditions $x_3\geq x_2\geq x_1$ every $S_3$-orbit of these metrics is parametrized exactly ones. We still need to know when condition $(ii)$ from \Cref{lem:canonical base iff} is satisfied. The curvature of the $(\mf k,B)$-extension is given by
\[
R = -\ad(h_1)|_\m\odot \ad(h_1)|_\m -\ad(h_2)|_\m\odot \ad(h_2)|_\m +\sum_{i,j=1}^2 (B^{-1})_{ij} \ad(h_i)|_\m\odot \ad(h_j)|_\m.
\]
In the basis $h_1,h_2$ this becomes
\[
R=6\lambda^2\begin{pmatrix}
-1 & 0\\
0 & -1
\end{pmatrix} 
+ 
6\lambda^2 \det(B)^{-1}\begin{pmatrix}
\frac{1}{3}(x_1+x_2+4x_3) & \frac{1}{\sqrt{3}}(x_1-x_2)\\
\frac{1}{\sqrt{3}}(x_1-x_2) & x_1+x_2
\end{pmatrix}.
\]
This has full rank if and only if $x_1x_2+x_2x_3+x_1x_3-x_1-x_2-x_3+\frac{3}{4}\neq 0$.
Under this condition the canonical base space is equal to $SU(3)/(S^1\times S^1)$ by \Cref{lem:canonical base iff}.  The $(\mf k,B)$-extension is always regular and irreducible. Under the above conditions we obtain a 4-parameter family of naturally reductive structures on $SU(3)$, with $\lambda>0$ and $x_1,x_2,x_3$ as parameters. None of these structures are isomorphic under the condition $x_3\geq x_2\geq x_1$.

\vspace{3pt}
\paragraph{\textbf{The canonical base space is $\{*\}$}}
We write $\g=\{0\}$ for the 0-dimensional Lie algebra. Let $\mf k=\su(3)$ and let $B= \frac{-1}{\lambda^2} B_{\su(3)}$. Let $x_1\.x_8$ be an orthonormal basis of $\su(3)$ with respect to $B$. The torsion and curvature are given by
\[
T(x,y,z)=2B([x,y],z)\quad \mbox{and}\quad R =\sum_{i=1}^8 \ad(x_i)\odot \ad(x_i),
\]
The infinitesimal model is always irreducible and regular and defines a 1-parameter family of naturally reductive structures on $\R^8$ with $\lambda>0$ as parameter, see \cite{Storm2018}.

\vspace{1em}

We summarize the classification of all 7- and 8-dimensional naturally reductive spaces of type II in the following theorem.

\begin{theorem}\label{thm:classification type II}
	Every 7- and 8-dimensional simply connected naturally reductive space of type II for which the semisimple factor of the canonical base space is compact is presented in \Cref{tab:type II}. In the fourth column is the number of parameters of naturally reductive structures of type II on the homogeneous space $G/H$. The canonical base space of the naturally reductive structure is in the second column. The third column indicates if partial dual spaces exist.
\end{theorem}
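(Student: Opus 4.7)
The plan is to reduce the classification to the constructive framework of Theorem \ref{thm:canonical base}: every naturally reductive decomposition of type II arises uniquely as a $(\mf{k},B)$-extension of some canonical base space $\g = \h \oplus \m \oplus_{L.a.} \R^n$ with $\h\oplus\m$ semisimple (or zero-dimensional). Thus I would first enumerate all possible canonical base spaces whose $(\mf{k},B)$-extensions can be $7$- or $8$-dimensional and irreducible, and then for each base space enumerate all admissible pairs $(\mf{k},B)$.

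For the enumeration of base spaces, I would split the base into a semisimple factor of dimension $d_{ss}$ and a Euclidean factor $\R^n$, and note that $d_{ss}+n+\dim(\mf k)-\dim(\mf k_1)=7$ or $8$. Systematically I would rule out cases by: (a) $\mf k \subset \so(n)\oplus \s(\h\oplus\m)$ must act on $\R^n$ without a fixed vector, else the extension is reducible by Lemma \ref{lem:reducibility criteria}; (b) by Remark \ref{rem:reducible extension}, every irreducible semisimple subfactor must either be Hermitian symmetric or have nontrivial $\mf s(\cdot)$, and when $\mf k$ is abelian with $\mf k=\mf k_1$, each semisimple subfactor must have nontrivial center; (c) the classification of lower-dimensional type I spaces (from Theorem \ref{thm:classification type I} and \cite{TricerriVanhecke1983,KowalskiVanhecke1983,KowalskiVanhecke1985,AgricolaFerreiraFriedrich2015}) supplies what the semisimple part can be. Running these checks through every allowed distribution of dimensions yields exactly the lists \eqref{eq:canonical base 7} and \eqref{eq:canonical base 8}.

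Given the list of canonical base spaces, I would then for each one parametrize all Lie subalgebras $\mf{k} \subset \mf s(\g)$ together with all $\ad(\mf{k})$-invariant inner products $B$, normal-form them up to the action of $\Aut(\g)$ preserving $\h$ (using Proposition \ref{prop:iso type II}), impose the two conditions of Proposition \ref{lem:canonical base iff} to guarantee the canonical base is preserved (here Lemma \ref{lem:ker(R) characterisation} shrinks the kernel of $R$ to the central part $\ad(\mc Z(\h\oplus \mf k))$, often making the condition a polynomial inequality in the coefficients), and apply Lemma \ref{lem:reducibility criteria} to cut out the reducible loci. Regularity is decided by checking when the connected subgroup with Lie algebra $\h_0 = \mf{k}^\perp\cap(\h\oplus\mf k)$ is closed in the ambient Lie group, which under the standard circle parameters is a rationality condition. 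Finally, I would invoke Remark \ref{rem:recall dual} to detect which compact base spaces carry a partial dual (any simple symmetric factor can be replaced by its noncompact dual, subject to positivity of $\overline{g}|_{\m\times\m}$).

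The conceptual content is already handled by Theorems \ref{thm:canonical base}, \ref{thm:kostant} and Propositions \ref{lem:reducibility criteria}, \ref{lem:canonical base iff}, \ref{prop:iso type II}, so the main obstacle is really the bookkeeping: one must cover every canonical base space in the two lists, and for each correctly parametrize the normal form of $(\mf k,B)$, the irreducibility locus, the canonical-base-preservation locus, and the isomorphism classes. Many of the cases (for instance $S^2\times \R^4$, $\C P^2\times \R^2$, the Hermitian symmetric bases $SO(5)/(SO(3)\times SO(2))$ and $SU(4)/S(U(1)\times U(3))$, or $SU(2)^3/(S^1_{q_1}\times S^1_{q_2})$) follow the same template with only mild modifications, so I would treat one representative in each family explicitly and merely record the outcomes in the remaining ones. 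The resulting list, together with the type I list from Theorem \ref{thm:classification type I}, is then assembled into the table of Theorem \ref{thm:classification type II}.
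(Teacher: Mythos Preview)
Your overall strategy matches the paper's exactly: reduce via \Cref{thm:canonical base} to $(\mf k,B)$-extensions of canonical base spaces, enumerate the admissible bases using \Cref{lem:reducibility criteria} and \Cref{rem:reducible extension} together with the lower-dimensional type I classification, and then for each base normal-form $(\mf k,B)$ modulo $\Aut(\g)$ while imposing \Cref{lem:canonical base iff} and \Cref{lem:ker(R) characterisation}. The paper proceeds in precisely this way, and likewise treats only a few representative cases in full detail.

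One concrete slip: your dimension count $d_{ss}+n+\dim(\mf k)-\dim(\mf k_1)$ is wrong. The $(\mf k,B)$-extension always has dimension $\dim(\text{base})+\dim(\mf k)=d_{ss}+n+\dim(\mf k)$; no $\dim(\mf k_1)$ correction appears. You can see this directly from the paper's examples: over $\C P^2\times S^2$ (so $n=0$, hence $\mf k_1=\mf k$) a $1$-dimensional $\mf k$ gives a $7$-dimensional space, and over the point $\{*\}$ with $\mf k=\su(3)$ one gets $\R^8$. With your formula both would collapse to the dimension of the base. Since the enumeration of canonical base spaces hinges on this count, the stated formula would in principle wipe out every base with $n=0$ from the lists \eqref{eq:canonical base 7} and \eqref{eq:canonical base 8}. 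Fix the formula and the rest of your outline goes through as written.
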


\begin{notation}
	In \Cref{tab:type II} $H^n$ denotes the $n$-dimensional Heisenberg group and $QH^7$ denotes the $7$-dimensional quaternionic Heisenberg group. The subscripts $q_i\in \Q$ and $\alpha\in \R$ denote parameters which determine the subgroup, see \Cref{sec:Classification type I} for the details. Lastly for $\varphi:\mf k\to \so(n)$ a Lie algebra representation $Nil(\varphi)$ denotes a naturally reductive structure on the 2-step nilpotent Lie group as described in \cite{Gordon1985} and \cite[Sec.~2.2]{Storm2018}. 
\end{notation}

%
\begin{table}
	\[
	\begin{array}{| c | c | c | c | }
	\hline
	G/H & \mbox{canonical base space} & \mbox{dual} &  \#~ \mbox{param.} \\
	\hline
	\hline
	H^7 & \R^6 & \text{\xmark} & 3 \\
	\hline
	QH^7 & \R^4 & \text{\xmark} & 1 \\
	\hline
	S^2\times H^5 & S^2\times \R^4 & \text{\cmark} & 4 \\
	\hline
	S^2\times S^2 \times H^3 & S^2\times S^2\times \R^2 & \text{\cmark} & 5 \\
	\hline
	\C P^2\times H^3 & \C P^2 \times \R^2 & \text{\cmark} & 3 \\
	\hline
	Sp(2)/Sp(1)_{\mbox{st}} & Sp(2)/(SU(2)\times S^1) & \text{\xmark} & 2 \\
	\hline
	SU(3)/S^1_{q} & SU(3)/(S^1\times S^1) & \text{\xmark}  & 2\\
	\hline
	SO(5)/SO(3)_{\mbox{st}} & SO(5)/(SO(3)\times SO(2)) & \text{\cmark} & 2  \\
	\hline
	SU(4)/SU(3) & SU(4)/S(U(1)\times U(3)) & \text{\cmark} & 2\\
	\hline
	(SU(3)\times SU(2))/(SU(2)\times S^1_{q}) & \C P^2\times S^2 & \text{\cmark} & 3 \\
	\hline
	SU(2)^3/(S^1_{q_1}\times S^1_{q_2}) & S^2\times S^2\times S^2 & \text{\cmark} & 4  \\
	\hline
	\hline
	Nil(\R^2\to \so(6)) & \R^6 & \text{\xmark} & 5 \\
	\hline
	Nil(\varphi_{\mbox{ir}} : \so(3)\to \so(5)) & \R^5 & \text{\xmark} & 1 \\
	\hline
	Nil(\mf u(2) \to \so(4)) & \R^4 & \text{\xmark} & 2 \\
	\hline
	(SU(2)\times Nil(\R^2\to \so(4)))/\R & S^2\times \R^4 & \text{\cmark} & 6 \\
	\hline
	SU(2) \times H^5 & S^2\times \R^4 & \text{\cmark} & 5 \\
	\hline
	SU(2)\times H^5 & SU(2)\times \R^4 & \text{\xmark} & 4\\
	\hline
	SU(3)/SU(2)_{\mbox{st}} \times H^3 & \C P^2 \times \R^2 & \text{\cmark} & 4\\
	\hline
	(SU(2)\times SU(2)\times H^3)/\R_\alpha & S^2\times S^2 \times \R^2 & \text{\cmark} & 6 \\
	\hline
	(SU(2)\times SU(2))/S^1_{q} \times H^3 & S^2\times S^2 \times \R^2 & \text{\cmark} & 5 \\
	\hline
	(SU(2)\times SU(2)\times H^3)/\R_\alpha & (SU(2)\times SU(2))/S^1_{\alpha}\times \R^2 & \text{\cmark} & 5 \\
	\hline
	(SU(2)\times SU(2))/S^1_q\times H^3 & (SU(2)\times SU(2))/S^1_{q}\times \R^2 & \text{\cmark} & 4 \\
	\hline
	SU(3)/SU(2)_{\mbox{st}}\times H^3 & SU(3)/SU(2)_{\mbox{st}}\times \R^2 & \text{\xmark} & 3\\
	\hline
	SU(2)\times S^2 \times H^3 & SU(2)\times S^2\times \R^2 & \text{\cmark} & 5 \\
	\hline
	SU(3) & SU(3)/S^1_{q} & \text{\xmark} & 3 \\
	\hline
	SU(2)^3/S^1_{q_3} & SU(2)^3/(S^1_{q_1}\times S^1_{q_2}) & \text{\cmark} & 4 \\
	\hline
	SU(2)^3/S^1_{q_1,q_2} & (SU(2)\times SU(2))/S^1_{q}\times S^2 & \text{\cmark} & 4 \\
	\hline
	(SU(3)\times SU(2))/SU(2)_{\mbox{st}\times \mbox{id}} & (SU(3)\times SU(2))/(SU(2)_{\Delta}\times S^1) & \text{\cmark} & 3 \\
	\hline
	(SU(3)\times SU(2))/SU(2)_{\mbox{st}} & (SU(3)\times SU(2))/(SU(2)_{\mbox{st}}\times S^1_{q}) & \text{\cmark} & 3 \\
	\hline
	SU(3) & SU(3)/(S^1\times S^1) & \text{\xmark} & 4 \\
	\hline
	SU(3)/SU(2)_{\mbox{st}} \times SU(2) & \C P^2\times S^2 & \text{\cmark} & 5 \\
	\hline 
	SU(2)^3/S^1_{q_1,q_2} & S^2\times S^2\times S^2 & \text{\cmark} & 6 \\
	\hline
	\R^8 & \{*\} & \text{\xmark} & 1 \\
	\hline
	\end{array}
	\]
	\captionof{table}{7- and 8-dimensional type II spaces. \label{tab:type II}}
\end{table}

\proof[Acknowledgements]
This paper is part of my PhD thesis supervised by Professor Ilka Agricola, whom I would like to thank for her ongoing support and guidance.
\newpage

\bibliographystyle{abbrv_url}
\bibliography{./NaturalReductiveDim7}

\end{document}